\newtheorem{Theorem}{Theorem}[section]
\newtheorem{Definition}{Definition}[section]
\newtheorem{Lemma}{Lemma}[section]
\newtheorem{Example}{Example}[section]
\newtheorem{Remark}{Remark}[section]
\title{Two-point boundary value problems and exact controllability for several kinds of linear and nonlinear wave
equations}
\author{De-Xing Kong\footnote{Department of Mathematics, Zhejiang University,
Hangzhou 310027, China.}  $\quad$ and $\quad$ Qing-You
Sun\footnote{Center of Mathematical Sciences, Zhejiang University,
Hangzhou 310027, China.}\\}
\date{ }
\begin{document}
\maketitle

\noindent{\bf Abstract}: In this paper we introduce some new
concepts for second-order hyperbolic equations: two-point boundary
value problem, global exact controllability and exact
controllability. For several kinds of important linear and nonlinear
wave equations arising from physics and geometry, we prove the
existence of smooth solutions of the two-point boundary value
problems and show the global exact controllability of these wave
equations. In particular, we investigate the two-point boundary
value problem for one-dimensional wave equation defined on a closed
curve and prove the existence of smooth solution which implies the
exact controllability of this kind of wave equation. Furthermore,
based on this, we study the two-point boundary value problems for
the wave equation defined on a strip with Dirichlet or Neumann
boundary conditions and show that the equation still possesses the
exact controllability in these cases. Finally, as an application, we
introduce the hyperbolic curvature flow and obtain a result
analogous to the well-known theorem of Gage and Hamilton for the
curvature flow of plane curves.

\vskip 10mm

\noindent{\bf Key words and phrases}: wave equation, two-point
boundary value problem, global exact controllability, exact
controllability, Dirichlet boundary condition, Neumann boundary
condition, hyperbolic curvature flow, Gage-Hamilton's theorem.

\vskip 6mm

\noindent{\bf 2000 Mathematics Subject Classification}: 35L05,
 93B05, 93C20.

\newpage
\baselineskip=7mm

\section{Introduction}\label{sec:1}

Consider the following seconder-order hyperbolic partial
differential equation
\begin{equation} \label{eq:1-1.0}
\mathscr{P}(t,x,u,Du,D^2u)=0,
\end{equation}
where $t$ is the time variable, $x=(x_1,\cdots x_n)$ stand for the
spacial variables, $u=u(t,x)$ is the unknown function, $\mathscr{P}$
is a given smooth function of the independent of variables
$t,x_1,\cdots,x_n$, the unknown function $u$, the first-order
partial derivatives $Du=(u_t,u_{x_1},\cdots,u_{x_n})$ and the
second-order partial derivatives $D^2u=(u_{tt},u_{tx_1},\cdots)$.
Since we only consider the hyperbolic case, the initial data
associated with the equation \eqref{eq:1-1.0} read
\begin{equation}\label{eq:1-1.00}
t=0:\;\;u=u_0(x),\quad u_t=u_1(x),
\end{equation}
where $u_0(x)$ and $u_1(x)$ are two given functions which stand for
the initial position and the initial velocity, respectively. The
equation \eqref{eq:1-1.0} and the initial data \eqref{eq:1-1.00}
constitute the famous Cauchy problem. Another important problem
related to \eqref{eq:1-1.0} is the following so-called {\it
two-point boundary value problem} for the equation \eqref{eq:1-1.0}:

\vskip 3mm

\noindent{\bf Two-point Boundary Value Problem (TBVP):} $\;${\it
Given two suitable smooth functions $u_0(x),u_T(x)$ and a positive
constant $T$, can we find a $C^2$-smooth function $u=u(t,x)$ defined
on the strip $[0,T]\times\mathds{R}^{n}$ such that the function
$u=u(t,x)$ satisfies the equation \eqref{eq:1-1.0} on the domain
$[0,T]\times\mathds{R}^{n}$, the initial condition
\begin{equation}\label{eq:1-1.01}
u(0,x)=u_0(x),\quad\forall\;x\in \mathds{R}^{n}
\end{equation}
and the terminal condition
\begin{equation}\label{eq:1-1.02}
u(T,x)=u_T(x),\quad\forall\;x\in \mathds{R}^{n}?
\end{equation}}

\noindent{\bf Another Statement of TBVP:} $\;${\it Given two
suitable smooth functions $u_0(x),u_T(x)$ and a positive constant
$T$, can we find an initial velocity $u_t(0,x)=u_1(x)$ such that the
Cauchy problem \eqref{eq:1-1.0}-\eqref{eq:1-1.00} has a solution
$u=u(t,x)\in C^2([0,T]\times\mathds{R}^{n})$ which satisfies the
terminal condition \eqref{eq:1-1.02}?}

The system \eqref{eq:1-1.0} together with
\eqref{eq:1-1.01}-\eqref{eq:1-1.02} can be viewed as a distributed
parameter control system when the initial velocity function $u_1(x)$
is considered as a control input.

We now give the following definition.

\begin{Definition}
For any given $T>0$, if the TBVP \eqref{eq:1-1.0},
\eqref{eq:1-1.01}-\eqref{eq:1-1.02} has a $C^2$ solution on the
strip $[0,T]\times\mathds{R}^{n}$, then the equation
\eqref{eq:1-1.0} is called to possess the global exact
controllability; If the TBVP \eqref{eq:1-1.0},
\eqref{eq:1-1.01}-\eqref{eq:1-1.02} admits a $C^2$ solution on the
strip $[0,T]\times\mathds{R}^{n}$ for some given $T>0$, then we say
that the equation \eqref{eq:1-1.0} possesses the exact
controllability.
\end{Definition}

\begin{Remark} \label{rem:1-1.0} For the system of
seconder-order hyperbolic partial differential equations, we have
similar concepts and definitions; For higher-order hyperbolic
partial differential equations, we have a similar discussion.
\end{Remark}

The wave equations play an important role in both theoretical and
applied fields, they include two classes: linear wave equations and
nonlinear wave equations. The classical wave equation is an
important second-order linear partial differential equation of
waves, such as sound waves, light waves and water waves. It arises
in fields such as acoustics, electromagnetics, fluid dynamics, and
general relativity. Historically, the problem of a vibrating string
such as that of a musical instrument was studied by Jean le Rond
d'Alembert, Leonhard Euler, Daniel Bernoulli, and Joseph-Louis
Lagrange.

The wave equation is the prototypical example of a hyperbolic
partial differential equation. In its simplest form, the wave
equation refers to a unknown scalar function $y=y(t,x)$ which
satisfies
\begin{equation} \label{eq:1-1.1}
y_{tt}-c^2\Delta y=0,
\end{equation}
where ${\displaystyle\Delta=\sum^n_{i=1}\frac{\partial^2}{\partial
x_i^2}}$ denotes the Laplacian and $c$ is a fixed constant which
stands for the propagation speed of the wave. One of aims of the
present is to investigate the TBVP for \eqref{eq:1-1.1}, more
precisely we study the following TBVP:

\vskip 3mm

\noindent{\bf TBVP for \eqref{eq:1-1.1}:} $\;${\it Given two
functions $f(x),g(x)\in C^{[n/2]+2}(\mathds{R}^{n})$ and a positive
constant $T$, can we find a $C^2$-smooth function $y=y(t,x)$ defined
on the strip $[0,T]\times\mathds{R}^{n}$ such that the function
$y=y(t,x)$ satisfies the equation \eqref{eq:1-1.1} on the domain
$[0,T]\times\mathds{R}^{n}$, the initial condition
\begin{equation}\label{eq:1-1.2}
y(0,x)=f(x),\quad\forall\;x\in \mathds{R}^{n}
\end{equation}
and the terminal condition
\begin{equation}\label{eq:1-1.3}
y(T,x)=g(x),\quad\forall\;x\in \mathds{R}^{n}?
\end{equation}}

\noindent{\bf Another Statement:} $\;${\it Given two functions
$f(x),g(x)\in C^{[n/2]+2}(\mathds{R}^{n})$ and a positive constant
$T$, can we find an initial velocity $v=v(x)$ such that the Cauchy
problem for the wave equation \eqref{eq:1-1.1} with the initial data
\begin{equation}\label{eq:1-1.4}
t=0:\;\;y=f(x),\quad y_t=v(x)
\end{equation}
has a solution $y=(t,x)\in C^2([0,T]\times\mathds{R}^{n})$ which
satisfies the terminal condition \eqref{eq:1-1.3}?}

In this paper we shall show the global exact controllability of the
equation \eqref{eq:1-1.1} and some nonlinear wave equations arising
from geometry and the theory of relativistic strings.

It is well-known that there are many deep and beautiful results on
the TBVP for ordinary differential equations, however, according to
the authors' knowledge, few of results on the TBVP for hyperbolic
equations, even for (linear or nonlinear) wave equations have been
known. Therefore, we can say that the result presented in this paper
is the first result on this research topic.

On the other hand, the study on {\it boundary} control problems for
hyperbolic systems was initiated by D.L. Russell in the 1960s. In
\cite{Ru1}, using the characteristic method, he showed that a class
of $n\times n$ first order linear hyperbolic systems is exactly
boundary controllable. This work led to an intensive investigation
of controllability and stabilization of linear hyperbolic systems
for more than 30 years. The literature pertaining to this study is
now absolutely enormous, we refer to two excellent review papers
Russell \cite{Ru2} and Lions \cite{lions}. However, while it may be
fair to say that the study of boundary control of linear hyperbolic
systems is now nearly complete, the study of nonlinear hyperbolic
systems is still vastly open.  Up to now, some results on the exact
boundary controllability of (abstract) semilinear wave equations
have been obtained (see \cite{chen}, \cite{che}, \cite{fa},
\cite{LT1}-\cite{LT2}, \cite{zu1}-\cite{zu3} and references cited
therein). As for boundary control of quasilinear hyperbolic systems,
there have been few results so far. Motivated by Ruessell's work,
Cirin\`a \cite{ci} studied boundary control of general quasilinear
hyperbolic systems. Using a different approach from that of Russell,
he proved that the system is {\it locally} exactly boundary
controllable in the sense that the $C^1$ norms of both initial and
terminal states are required to be small.

All the above results are obtained under the assumption that the
initial and terminal states are smooth, and they are discussed in
the framework of classical solutions. For the global exact
controllability of system (1.1) in the case that the initial and
terminal states maybe contain discontinuity points of the first
kind, up to now only a few of results have been known. In Kong
\cite{kong1}, the author investigated the global exact boundary
controllability of $2\times 2$ quasilinear hyperbolic system of
conservation laws with linearly degenerate characteristics and
proved that the system with nonlinear boundary conditions is
globally exactly boundary controllable in the class of piecewise
$C^1$ functions. Later, by a new constructive method, Kong and Yao
reproved the global exact boundary controllability of a class of
quasilinear hyperbolic systems of conservation laws with linearly
degenerate characteristics, shown that the system with nonlinear
boundary conditions is globally exactly boundary controllable in the
class of piecewise $C^1$ functions, in particular, gave the optimal
control time of the system (see \cite{kong2}).

Here we would like to remark that the TBVP and the boundary control
problems are essentially different two kinds of problems. Both of
them play an important role in both theoretical and applied aspects.

We now state the first result in this paper.
\begin{Theorem} \label{thm:1-1.1}
The TBVP \eqref{eq:1-1.1}-\eqref{eq:1-1.3} admits a $C^2$-smooth
solution $y = y(t,x)$ defined on the strip
$[0,T]\times\mathds{R}^n$.
\end{Theorem}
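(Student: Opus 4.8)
The plan is to solve the TBVP by an explicit spectral representation that encodes the initial and terminal conditions simultaneously, thereby reducing the existence question to a regularity analysis of a single Fourier multiplier. Writing $\widehat{f}$ for the spatial Fourier transform of $f$ and $|\xi|$ for the Euclidean norm of the dual variable, every tempered solution of \eqref{eq:1-1.1} has the form $\widehat{y}(t,\xi)=A(\xi)\cos(c|\xi|t)+B(\xi)\sin(c|\xi|t)$. Imposing $\widehat{y}(0,\xi)=\widehat{f}(\xi)$ and $\widehat{y}(T,\xi)=\widehat{g}(\xi)$ forces $A=\widehat{f}$ and $B\sin(c|\xi|T)=\widehat{g}-\widehat{f}\cos(c|\xi|T)$, so I would propose the candidate
\[
\widehat{y}(t,\xi)=\frac{\sin\big(c|\xi|(T-t)\big)}{\sin(c|\xi|T)}\,\widehat{f}(\xi)+\frac{\sin(c|\xi|t)}{\sin(c|\xi|T)}\,\widehat{g}(\xi).
\]
A direct differentiation shows that this formally satisfies the equation \eqref{eq:1-1.1} together with \eqref{eq:1-1.2} and \eqref{eq:1-1.3}; equivalently, it prescribes the control $v(x)=y_t(0,x)$ required in the ``Another Statement'' formulation.

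The essential difficulty is that the multiplier is singular: $\sin(c|\xi|T)$ vanishes on the discrete family of spheres $|\xi|=k\pi/(cT)$, $k=1,2,\dots$, where the numerators do not vanish for general $f,g$. Thus the naive inversion produces a simple pole on each such resonant sphere, and no choice of $B(\xi)$ can repair the identity $0\cdot B=\widehat{g}-(-1)^k\widehat{f}$ holding there. This is the heart of the matter and the step I expect to be hardest: one must show that, although the solution operator $v\mapsto y(T,\cdot)$ is not boundedly invertible, the terminal state $g$ is nevertheless reachable by a genuine $C^2$ solution. The price is that the resulting $v$ and the profile $y(t,\cdot)$ need not decay at spatial infinity — the pole on each resonant sphere corresponds precisely to a bounded, non-integrable oscillatory contribution rather than to a loss of local smoothness.

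To make this rigorous I would read the radial factor $1/\sin(c|\xi|T)$ as a principal value across each resonant sphere and control the inverse transform using the decay $|\widehat{f}(\xi)|,|\widehat{g}(\xi)|\lesssim\langle\xi\rangle^{-([n/2]+2)}$ coming from $f,g\in C^{[n/2]+2}(\mathds{R}^n)$; the exponent $[n/2]+2$ is exactly the Sobolev threshold that guarantees two classical spatial derivatives. The cleanest model is $n=1$, where the formula is equivalent to the d'Alembert ansatz $y=F(x+ct)+G(x-ct)$: the two conditions reduce to $G=f-F$ together with the difference equation $F(u)-F(u-2cT)=g(u-cT)-f(u-2cT)$, which I would solve by prescribing a $C^2$ profile on one fundamental interval of length $2cT$, matching its two-jet at the endpoints (three scalar conditions, easily met) and extending by the recursion to a global $C^2$ function. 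This construction makes transparent both the solvability for arbitrary data and the generically non-decaying behaviour of the solution.

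For general $n$ I would reduce to this one-dimensional picture by the plane-wave / spherical-means decomposition: applying the Radon transform in $x$ turns \eqref{eq:1-1.1} into a one-parameter family of one-dimensional wave equations in the affine variable, to each of which the construction above applies with data obtained from $f$ and $g$, after which the inversion formula reconstructs $y$. The remaining work is to verify that the reconstructed $y$ is of class $C^2$ on the whole strip $[0,T]\times\mathds{R}^n$ and depends smoothly enough on the angular variable, which is once more governed by the $C^{[n/2]+2}$ hypothesis and is where the bulk of the estimates would lie. Establishing this regularity — equivalently, the convergence of the oscillatory, non-$L^1$ inverse transform of the singular multiplier to a genuine $C^2$ solution — is the main obstacle, whereas verifying the boundary conditions and the equation itself is routine once the representation is justified.
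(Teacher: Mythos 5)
Your one-dimensional construction is correct, and it is essentially the paper's own argument one integration up: the paper (Theorem \ref{thm:2.1}) reduces to zero initial position and builds the initial velocity $v$ by prescribing it on a fundamental interval $[-T,T]$ subject to the three scalar conditions \eqref{eq:2.8} and extending by the recursion \eqref{eq:2.9}, which is exactly your ``prescribe a profile on one fundamental interval, match the two-jet, extend by the difference equation'' scheme applied to $v=F'$ rather than to $F$. The problem is the route you propose for passing to several space variables. The theorem assumes only $f,g\in C^{[n/2]+2}(\mathds{R}^n)$, with no decay or integrability whatsoever; consequently $\widehat f,\widehat g$ are in general merely tempered distributions, and the bound $|\widehat f(\xi)|\lesssim\langle\xi\rangle^{-([n/2]+2)}$ that your estimates rest on is false (already $f\equiv 1$ gives $\widehat f=\delta_0$). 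The resonance difficulty is also not curable by a principal-value prescription: take $n=1$, $c=1$, $f=0$, $g(x)=\sin(\pi x/T)$. Then $\widehat g$ consists of two Dirac masses sitting \emph{exactly on} the first resonant sphere $|\xi|=\pi/T$, so the product of $\widehat g$ with $\mathrm{p.v.}\,(1/\sin(|\xi|T))$ is ill-defined; yet the TBVP is perfectly solvable, e.g.\ by the control $v(\tau)=-(\pi/T^{2})\,\tau\cos(\pi\tau/T)$, whose Fourier transform is a \emph{derivative} of a Dirac mass on that sphere. This also refutes your assertion that ``no choice of $B(\xi)$ can repair'' the resonant identity: distributional $B$ (delta layers and their derivatives supported on the resonant spheres) can and in general must occur, which is precisely why the reachable controls grow linearly in $x$. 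Finally, the Radon-transform reduction you invoke for general $n$ suffers from the same defect as the Fourier transform: it requires integrability of the data along hyperplanes, which bounded non-decaying $C^{[n/2]+2}$ data need not have. So the spectral/Radon route, as proposed, cannot establish the statement in the generality in which it is claimed.

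The repair is to replace your global-transform reduction by one involving only integration over \emph{compact} sets, which is what the paper does: for odd $n=2k+1$ it forms the spherical means $\mathcal{A}_r y$ and the function $w(t,r)=\big((1/r)\partial_r\big)^{k-1}\big(r^{2k-1}\mathcal{A}_r y\big)$, which satisfies the one-dimensional TBVP \eqref{eq:3.4}; Theorem \ref{thm:2.1} solves that problem for arbitrary (non-decaying) data, and $y$ is recovered from $w$ by the limit \eqref{eq:3.6}; even $n$ then follows by Hadamard's method of descent. Had you used spherical means in place of the Radon transform as the bridge from your (correct) one-dimensional construction to $\mathds{R}^n$, your proof would coincide in structure with the paper's; as written, the multidimensional step has a genuine gap.
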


Theorem \ref{thm:1-1.1} implies that the wave equation
\eqref{eq:1-1.1} possesses the global exact controllability.

\begin{Remark} \label{rem:1-1.1}
The solution of the TBVP of \eqref{eq:1-1.1}-\eqref{eq:1-1.3} does
not possess the uniqueness (see Remarks \ref{rem:2.2} and
\ref{rem:3.2} for the details).
\end{Remark}

\begin{Remark} \label{rem:1-1.2}
Theorem \ref{thm:1-1.1} can be generalized to the case of
inhomogeneous wave equations, i.e.,
\begin{equation} \label{eq:1-1.5}
y_{tt}-c^2\Delta y=F(t,x),
\end{equation}
where $F(t,x)$ is a given function which stands for the source term
of the system.
\end{Remark}

\begin{Remark} \label{rem:1-1.3}
We have similar results for some nonlinear wave equations including
a wave map equation arising from geometry and the equations for the
motion of relativistic strings in the Minkowski space-time
$\mathds{R}^{1+n}$ (see Section \ref{sec:7} for the details). These
nonlinear wave equations play an important role in both mathematics
and physics.
\end{Remark}

However, some problems arising from engineering, control theory etc.
can be reduced to the TBVP for wave equations defined on a closed
curve, say, a circle. The typical example is the vibration of a
closed elastic string. In this case, since the wave equation is
defined on a closed curve and then the solution must possess the
periodicity, the method used in the proof of Theorem \ref{thm:1-1.1}
will no longer work. It needs some new ideas and new technologies to
study such a kind of problems.

In this paper we also investigate this kind of problems mentioned
above. For simplicity, we consider the following TBVP\footnote{In
fact, by scaling, any wave equation $y_{tt}-c^2y_{\theta\theta}=0$
with the propagation speed $c$ can be reduced to the wave equation
in \eqref{eq:1-2.5}. Therefore, in this paper, without loss of
generality, we only consider the wave equation with the propagation
speed $c=1$.}

\begin{equation} \label{eq:1-2.5}
\begin{cases}
y_{tt}-y_{\theta\theta}=0,\quad \forall\;(t,\theta)\in [0,T]\times \mathds{R},\\
y(0,\theta)=f(\theta),\quad \forall\;\theta\in \mathds{R},\\
y(T,\theta)=g(\theta),\quad \forall\;\theta\in \mathds{R},
\end{cases}
\end{equation}
where $y=y(t,\theta)$ is the unknown function of the variables $t$
and $\theta$, $T$ is a given positive constant, $f(\theta)$ and
$g(\theta)$ are two given periodic $C^3$ functions of $\theta\in
\mathds{R}$, say, the period is $L$, in which $L$ is a positive real
number. The second result in the present paper is the following
theorem.

\begin{Theorem} \label{thm:1-2.1}
The TBVP \eqref{eq:1-2.5} admits a global $L$-periodic $C^2$
solution $y = y(t,\theta)$ defined on the domain
$[0,T]\times\mathds{R}$, provided that $\displaystyle\frac{T}{L}$ is
a rational number with $\displaystyle\frac{2T}{L}\not\in
\mathds{N}$.
\end{Theorem}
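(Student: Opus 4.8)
The plan is to use d'Alembert's formula. The general $L$-periodic $C^2$ solution of the wave equation $y_{tt}-y_{\theta\theta}=0$ has the form $y(t,\theta)=\phi(\theta+t)+\psi(\theta-t)$, where $\phi,\psi\in C^2(\mathds{R})$. Imposing the initial condition $y(0,\theta)=f(\theta)$ gives $\phi(\theta)+\psi(\theta)=f(\theta)$, while the terminal condition $y(T,\theta)=g(\theta)$ gives $\phi(\theta+T)+\psi(\theta-T)=g(\theta)$. The task therefore reduces to finding $\phi$ and $\psi$ satisfying these two functional equations, after which we must verify the $L$-periodicity of the resulting $y$. (Note that periodicity of $y$ in $\theta$ does not force $\phi,\psi$ to be individually periodic, only that their nonperiodic parts cancel; this subtlety is where the rationality hypothesis will enter.)

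First I would write $T/L=p/q$ in lowest terms, so that the hypothesis $2T/L\notin\mathds{N}$ translates into a parity/divisibility condition on $p,q$ (namely $2p/q\notin\mathds{N}$, which for $\gcd(p,q)=1$ means $q\nmid 2p$, i.e. $q\geq 3$, or more precisely $q$ does not divide $2$). Next I would decouple the two functional equations. Subtracting suitable shifts, I would derive a single difference equation for one of the unknowns: for instance, eliminating $\phi$ by writing $\phi(\theta)=f(\theta)-\psi(\theta)$ and substituting into the terminal relation yields $f(\theta+T)-\psi(\theta+T)+\psi(\theta-T)=g(\theta)$, i.e. $\psi(\theta+T)-\psi(\theta-T)=f(\theta+T)-g(\theta)=:h(\theta)$. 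This is a second-order difference equation in $\psi$ with step $2T$.

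The heart of the argument is solving this difference equation $\psi(s+2T)-\psi(s)=h(s-T)$ consistently with the requirement that the final $y(t,\theta)$ be $L$-periodic in $\theta$. I would iterate the relation: summing $N$ copies telescopes to $\psi(s+2NT)-\psi(s)=\sum_{k=0}^{N-1}h(s+2kT-T)$. Since $T/L$ is rational, writing $2T=(2p/q)L$ shows that after $q$ steps (or $\mathrm{lcm}$-many steps) the argument $s+2NT$ returns to $s$ modulo $L$; combining this with the imposed $L$-periodicity of $\psi$ produces a closed consistency condition — the cumulative sum of the shifts of $h$ over one full period must vanish (a solvability/Fredholm-type condition). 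Here the hypothesis $2T/L\notin\mathds{N}$ is exactly what guarantees that $2T$ is not an integer multiple of the period $L$, so the shifts $s, s+2T, s+4T,\dots$ visit $q$ distinct residues mod $L$ rather than collapsing, which is what makes the averaging condition solvable for arbitrary data $f,g$. The main obstacle I anticipate is this compatibility analysis: one must check that the telescoped sum over a full cycle of residues indeed vanishes identically (or can be made to vanish by the free constant in $\psi$), and then define $\psi$ explicitly on one fundamental block of length $2T$ and extend it by the difference equation, finally verifying that the reconstructed $\phi,\psi$ are $C^2$ and that $y=\phi(\theta+t)+\psi(\theta-t)$ is genuinely $L$-periodic in $\theta$ for all $t\in[0,T]$. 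Tracking the regularity ($f,g\in C^3$ giving $y\in C^2$) and confirming that the rationality with $2T/L\notin\mathds{N}$ is used precisely at the solvability step will complete the proof.
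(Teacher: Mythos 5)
You take a genuinely different route from the paper: the paper expands $y$ in a Fourier series in $\theta$, reduces the wave equation to the ODEs \eqref{eq:4.9} for the coefficients $a_k(t),b_k(t)$, determines the free constants $\beta_k,\bar\beta_k$ from the terminal data, and uses the arithmetic hypothesis on $T/L$ only through the lower bound \eqref{eq:4.22} on $\left|\sin\left(\frac{2k\pi}{L}T\right)\right|$, which (together with Lemma \ref{lem:4.1}) keeps the series for $y$ and its first and second derivatives absolutely convergent. Your d'Alembert decomposition $y=\phi(\theta+t)+\psi(\theta-t)$ and the reduction to the difference equation $\psi(s+2T)-\psi(s)=h(s+T)$ (the shift should be $+T$, not $-T$ as you wrote, but that is immaterial) is a legitimate alternative skeleton, and you have correctly located the crux: consistency of the telescoped relation with the quasi-periodicity $\psi(s)=-\frac{c}{L}s+\psi_{per}(s)$, $\psi_{per}$ being $L$-periodic.

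However, the step you assert without proof --- that the hypothesis $\frac{2T}{L}\notin\mathds{N}$ ``is exactly what \dots\ makes the averaging condition solvable for arbitrary data $f,g$'' --- is false, and this is a genuine gap. Writing $\frac{T}{L}=\frac{p}{q}$ in lowest terms, after $q$ steps one needs $\sum_{j=0}^{q-1}h\big(s+(2j+1)T\big)=\psi(s+2pL)-\psi(s)=-2pc$, i.e.\ the telescoped sum must be \emph{constant in $s$}; the free constant $c$ absorbs its mean but not its oscillation. For generic data it is not constant. Concretely, take $L=2\pi$, $T=\pi/2$, $f(\theta)=\cos 2\theta$, $g\equiv 0$: all hypotheses of Theorem \ref{thm:1-2.1} hold ($\frac{T}{L}=\frac14$, $\frac{2T}{L}=\frac12\notin\mathds{N}$), yet $h(\theta)=-\cos 2\theta$ and the sum equals $4\cos 2s$. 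Equivalently, in Fourier terms, any $2\pi$-periodic $C^2$ solution has second cosine coefficient satisfying $a_2''=-4a_2$, hence $a_2(\pi/2)=-a_2(0)=-1\neq 0$ no matter which initial velocity is chosen: this TBVP has \emph{no} solution. The obstruction lives on the resonant modes $k$ with $\frac{2kT}{L}\in\mathds{N}$ (all multiples of $q$, so they exist whenever $T/L$ is rational); the hypothesis $\frac{2T}{L}\notin\mathds{N}$ guarantees only that not every mode is resonant, not that none is. You should also be aware that your method, carried out honestly, exposes a defect in the paper's own argument: in \eqref{eq:4.19}--\eqref{eq:4.20} the paper sets $\beta_k=\bar\beta_k=0$ at resonant $k$, but then the second equations of \eqref{eq:4.16}--\eqref{eq:4.17} are violated for those $k$ unless $f$ and $g$ satisfy extra compatibility conditions ($g$'s $k$-th coefficients equal $(-1)^{2kT/L}$ times $f$'s). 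So the consistency analysis you flagged as the ``main obstacle'' cannot be closed for arbitrary data; it is precisely where the theorem itself needs additional hypotheses.
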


\begin{Remark} \label{rem:1-2.1}
Theorem \ref{thm:1-2.1} implies that the wave equation in
\eqref{eq:1-2.5} possesses the exact controllability. On the other
hand, in general, the solution of the TBVP \eqref{eq:1-2.5} is not
unique (see the proof of Theorem \ref{thm:1-2.1} in Section
\ref{sec:4} for the details).
\end{Remark}

\begin{Remark} \label{rem:1-2.2}
In Theorem \ref{thm:1-2.1}, if $\displaystyle\frac{2T}{L}\in
\mathds{N}$, then there exists a relationship between $f(\theta)$
and $g(\theta)$. This means that $f(\theta)$ and $g(\theta)$ can not
be given arbitrarily. See Remark \ref{rem:4.2} for the details.
\end{Remark}

As a consequence, we consider the following TBVP defined on a circle
\begin{equation} \label{eq:1-2.6}
\begin{cases}
y_{tt}-y_{\theta\theta}=0,\quad \forall\;(t,\theta)\in [0,T]\times \mathbb{S}^1,\\
y(0,\theta)=f(\theta),\quad \forall\;\theta\in \mathbb{S}^1,\\
y(T,\theta)=g(\theta),\quad \forall\;\theta\in \mathbb{S}^1,
\end{cases}
\end{equation}
where $\mathbb{S}^1$ stands for the unit circle, $f(\theta)$ and
$g(\theta)$ are two given $C^3$ functions of
$\theta\in\mathbb{S}^1$. By Theorem \ref{thm:1-2.1}, we have

\begin{Theorem} \label{thm:1-2.2}
The TBVP \eqref{eq:1-2.6} admits a global $C^2$ solution $y =
y(t,\theta)$ defined on the cylinder $[0,T]\times\mathbb{S}^1$,
provided that $\displaystyle\frac{T}{2\pi}$ is a rational number and
$\displaystyle\frac{T}{\pi}\not\in \mathds{N}$.
\end{Theorem}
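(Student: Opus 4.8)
The plan is to deduce this statement directly from Theorem \ref{thm:1-2.1} by regarding the problem on the circle as a spatially periodic problem on the whole line. First I would identify the unit circle $\mathbb{S}^1$ with $\mathds{R}/2\pi\mathds{Z}$ via the angular parameter $\theta$, so that every function on $\mathbb{S}^1$ corresponds bijectively to a $2\pi$-periodic function on $\mathds{R}$ of the same smoothness class. In particular, the given data $f,g\in C^3(\mathbb{S}^1)$ lift to $2\pi$-periodic functions $\tilde f,\tilde g\in C^3(\mathds{R})$, and the TBVP \eqref{eq:1-2.6} on the cylinder $[0,T]\times\mathbb{S}^1$ is transformed into precisely the TBVP \eqref{eq:1-2.5} on the strip $[0,T]\times\mathds{R}$ with period $L=2\pi$ and initial/terminal data $\tilde f,\tilde g$.

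Next I would verify that the hypotheses of Theorem \ref{thm:1-2.1} hold for this choice $L=2\pi$. The requirement that $T/L=T/(2\pi)$ be a rational number is exactly the assumption that $T/(2\pi)$ is rational, while the requirement $2T/L=T/\pi\notin\mathds{N}$ is exactly the assumption $T/\pi\notin\mathds{N}$; thus the two hypotheses of Theorem \ref{thm:1-2.1} translate verbatim into the two conditions imposed in the present statement. Applying Theorem \ref{thm:1-2.1} then furnishes a global $2\pi$-periodic $C^2$ solution $\tilde y=\tilde y(t,\theta)$ of the wave equation on $[0,T]\times\mathds{R}$ satisfying $\tilde y(0,\cdot)=\tilde f$ and $\tilde y(T,\cdot)=\tilde g$.

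Finally I would descend this solution back to the cylinder. Since $\tilde y(t,\cdot)$ is $2\pi$-periodic in $\theta$ for each fixed $t$, it determines a single-valued $C^2$ function $y$ on $[0,T]\times\mathbb{S}^1$; the equation $y_{tt}-y_{\theta\theta}=0$, the initial condition $y(0,\cdot)=f$, and the terminal condition $y(T,\cdot)=g$ are all inherited directly from the corresponding identities for $\tilde y$. This produces the desired $C^2$ solution of \eqref{eq:1-2.6}.

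I do not expect a genuine obstacle here, as the result is a corollary of Theorem \ref{thm:1-2.1}; the only point requiring care is the clean passage between the circle and its $2\pi$-periodic unfolding on $\mathds{R}$, namely checking that periodicity and $C^2$-regularity are preserved in both directions, so that the lifted solution indeed descends to a well-defined function on the cylinder.
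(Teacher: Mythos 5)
Your proposal is correct and takes exactly the route the paper intends: the paper's entire proof is the sentence ``Theorem \ref{thm:1-2.2} comes from Theorem \ref{thm:1-2.1} directly,'' and your argument is just that deduction with the routine details (identifying $\mathbb{S}^1$ with $\mathds{R}/2\pi\mathds{Z}$, setting $L=2\pi$, matching the hypotheses $T/L\in\mathds{Q}$ and $2T/L\notin\mathds{N}$ to $T/(2\pi)\in\mathds{Q}$ and $T/\pi\notin\mathds{N}$, then descending the periodic solution to the cylinder) written out. Nothing is missing and nothing differs in substance from the paper.
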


\begin{Remark} \label{rem:1-2.3}
Theorem \ref{thm:1-2.2} implies that the wave equation defined on a
circle possesses the exact controllability.
\end{Remark}

\begin{Remark} \label{rem:1-2.4}
For the $(1+n)$-dimensional wave equation \eqref{eq:1-1.1}, we have
similar results, provided that the initial/terminal data $f(x)$ and
$g(x)$ in \eqref{eq:1-1.2}-\eqref{eq:1-1.3} are $C^{[n/2]+3}$ smooth
functions and are periodic in $r=\sqrt{x_1^2+\cdots +x_n^2}$. In
fact, in the present situation, using Theorem \ref{thm:1-2.1}, by a
way similar to the proof of Theorem \ref{thm:1-1.1} we can prove
that the TBVP \eqref{eq:1-1.1}-\eqref{eq:1-1.3} admits a global
$C^2$ solution $y = y(t,x)$ defined on the strip $[0,T]\times
\mathds{R}^n$, provided that $\displaystyle\frac{T}{L}$ is a
rational number with $\displaystyle\frac{2T}{L}\not\in \mathds{N}$,
where $L$ is the period of $f(x),\;g(x)$ with respect to $r$. In
other words, in this case the equation \eqref{eq:1-1.1} still
possesses the exact controllability.
\end{Remark}

Based on the results mentioned above, we further study the two-point
boundary value problems for the wave equation defined on a strip
with Dirichlet or Neumann boundary conditions and show that the
equation still possesses the exact controllability in these cases.
Finally, as an application of the results mentioned above, we
introduce the hyperbolic curvature flow and obtain a result
analogous to the well-known theorem of Gage and Hamilton \cite{gh}
for the curvature flow of plane curves.

This paper is organized as follows. Section \ref{sec:2} is devoted
to the study on the global exact controllability of one-dimensional
wave equation. Based on Section \ref{sec:2}, in Section \ref{sec:3}
we investigate the global exact controllability of linear wave
equations in several space variables and we prove Theorem
\ref{thm:1-1.1}. In Section \ref{sec:4}, we give the proof of
Theorem \ref{thm:1-2.1} and Theorem \ref{thm:1-2.2}, respectively.
As some applications of Theorem \ref{thm:1-2.1}, in Sections
\ref{sec:5} and \ref{sec:6} we study the two-point boundary value
problems for the wave equation defined on a strip with (homogeneous
or inhomogeneous) Dirichlet or Neumann boundary conditions and show
that the equation still possesses the exact controllability in these
cases. The global exact controllability of some nonlinear wave
equations arising from geometry and physics has been investigated in
Section \ref{sec:7}. In Section \ref{sec:8} we introduce the
hyperbolic curvature flow and prove a result analogous to the one
shown by Gage and Hamilton in \cite{gh} for curvature flow of plane
curves. In Section \ref{sec:9} we give a summary and some
discussions and then present several open problems.

\section{One-dimensional wave equation}\label{sec:2}

This section concerns the global exact controllability of
one-dimensional wave equation, which is a basis of the present
paper.

Consider the following TBVP for one-dimensional wave equation
\begin{equation} \label{eq:2.1}
\begin{cases}
y_{tt} - y_{xx} = 0,\\
y(0,x) = f(x),\\
y(T,x) = g(x),
\end{cases}
\end{equation}
where $T$ is an arbitrary fixed positive constant, $f(x)$ and $g(x)$
are two given functions of $x\in\mathds{R}$. We have the following
theorem which is a special case of Theorem \ref{thm:1-1.1} but
fundamental in this paper.
\begin{Theorem} \label{thm:2.1}
Suppose that $f(x)$ and $g(x)$ are two given $C^2$-smooth functions
of $x\in\mathds{R}$. Then the TBVP \eqref{eq:2.1} admits a
$C^2$-smooth solution $y = y(t,x)$ defined on the strip
$[0,T]\times\mathds{R}$.
\end{Theorem}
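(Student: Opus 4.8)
The plan is to reduce the TBVP to a functional equation for an initial velocity by means of d'Alembert's formula. Recall that every $C^2$ solution of $y_{tt}-y_{xx}=0$ is determined by the initial position $f$ and some initial velocity $v=y_t(0,\cdot)$ through
\[
y(t,x)=\tfrac12\big[f(x+t)+f(x-t)\big]+\tfrac12\int_{x-t}^{x+t}v(\xi)\,d\xi .
\]
Following the ``Another Statement'' of the TBVP, I would treat $v$ as the control to be found and impose the terminal condition $y(T,x)=g(x)$. Writing $V(x)=\int_0^x v(\xi)\,d\xi$, this terminal condition is equivalent to the difference equation
\[
V(x+T)-V(x-T)=R(x),\qquad R(x):=2g(x)-f(x+T)-f(x-T),
\]
where $R\in C^2(\mathds{R})$ since $f,g\in C^2(\mathds{R})$. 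Observe that a solution $V\in C^2$ (so that $v=V'\in C^1$) yields, via the representation above, a solution $y\in C^2$, because the term $\tfrac12[f(x+t)+f(x-t)]$ is already $C^2$.

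Thus the whole problem is transferred to solving this functional equation. Setting $s=x-T$ and $\tilde R(s):=R(s+T)\in C^2$, it becomes the pure shift equation $V(s+2T)-V(s)=\tilde R(s)$. The idea is to prescribe $V$ on the fundamental interval $[0,2T]$ and then propagate it to all of $\mathds{R}$ by the recursion $V(s)=V(s-2T)+\tilde R(s-2T)$ for $s>2T$ and $V(s)=V(s+2T)-\tilde R(s)$ for $s<0$. On $[0,2T]$ I would take any $C^2$ function whose endpoint data satisfy the compatibility relations
\[
V(2T)=V(0)+\tilde R(0),\quad V'(2T)=V'(0)+\tilde R'(0),\quad V''(2T)=V''(0)+\tilde R''(0);
\]
for instance one may set $V(0)=V'(0)=V''(0)=0$ and use a Hermite interpolant realizing the prescribed values at $s=2T$.

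The main step---and the only place requiring real care---is checking that this piecewise construction patches together into a genuinely $C^2$ function on the whole line. Here the translation invariance of the equation helps: since $\tilde R\in C^2$, the recursion preserves $C^2$-regularity inside each translate of $[0,2T]$, and the matching of $V,V',V''$ across \emph{every} junction $s=2Tk$ reduces, by telescoping, to the single set of compatibility relations displayed above. Once $V\in C^2(\mathds{R})$ is obtained I set $v=V'\in C^1(\mathds{R})$, substitute back into d'Alembert's formula, and obtain the desired $C^2$ solution on $[0,T]\times\mathds{R}$. Finally, I would note that the construction is highly non-unique: any $2T$-periodic $C^2$ function solves the homogeneous equation $V(s+2T)-V(s)=0$ and may be added to $V$, which accounts for the non-uniqueness anticipated in Remark \ref{rem:1-1.1}.
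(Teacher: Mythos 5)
Your proposal is correct, and its skeleton coincides with the paper's: both treat the initial velocity as the unknown control, insert it into d'Alembert's formula, reduce the terminal condition to a functional equation with period-$2T$ translation structure, solve that equation by prescribing a seed on one fundamental interval (subject to endpoint compatibility conditions) and propagating by recursion, and both observe the same source of non-uniqueness. The genuine difference is the level at which the shift equation is solved. The paper works with the velocity $v$ itself: its seed $u$ on $[-T,T]$ must satisfy the three conditions \eqref{eq:2.8} (an integral normalization plus two jump conditions), the extension \eqref{eq:2.9} is built from translates of $\tilde f'$, i.e.\ from the \emph{differentiated} relation $v(x+T)-v(x-T)=2\tilde f'(x)$, and consequently the undifferentiated terminal condition is not automatic --- the paper has to verify it by the telescoping integral computation \eqref{eq:2.17}--\eqref{eq:2.19}. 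You instead pass to the antiderivative $V$, which turns the first-kind Volterra equation \eqref{eq:2.29} of Remark \ref{rem:2.3} into the pure difference equation $V(s+2T)-V(s)=\tilde R(s)$; then the terminal condition holds by construction, the paper's three conditions \eqref{eq:2.8} reappear precisely as your $C^0$, $C^1$, $C^2$ matching of $V$ at the seam, and the only remaining work is the telescoping regularity check. Your reformulation is therefore somewhat cleaner, eliminating the separate verification step; what the paper's formulation buys is an explicit closed-form expression \eqref{eq:2.9} for the control $v$ itself, which the authors exploit later (e.g.\ the sign condition in Lemma \ref{lem:7.1} of Section \ref{sec:7} reads off the non-negativity of $v$ directly from that formula).
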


\begin{proof}
Introduce
\begin{equation} \label{eq:2.2}
\tilde{y}(t,x) = y(t,x) - \displaystyle\frac{f(x-t)+f(x+t)}{2},
\end{equation}
and
\begin{equation} \label{eq:2.3}
\tilde{f}(x) = g(x) - \displaystyle\frac{f(x-T)+f(x+T)}{2}.
\end{equation}
Obviously, $\tilde{f}(x)$ is a $C^2$-smooth function of
$x\in\mathds{R}$. By means of $\tilde{y}(t,x)$ and $\tilde{f}(x)$,
the TBVP \eqref{eq:2.1} can be equivalently rewritten as
\begin{equation} \label{eq:2.4}
\begin{cases}
\tilde{y}_{tt} - \tilde{y}_{xx} = 0,\\
\tilde{y}(0,x) = 0,\\
\tilde{y}(T,x) = \tilde{f}(x),
\end{cases}
\end{equation}
Therefore, in order to prove Theorem \ref{thm:2.1}, it suffices to
show that the TBVP \eqref{eq:2.4} has a $C^2$-smooth solution
$\tilde{y} = \tilde{y}(t,x)$ defined on the strip
$[0,T]\times\mathds{R}$.

By d'Alembert formula, the solution of the following Cauchy problem
\begin{equation} \label{eq:2.5}
\begin{cases}
\tilde{y}_{tt} - \tilde{y}_{xx} = 0,\\
t=0:\;\;\tilde{y} = 0,\quad \tilde{y}_t = v(x)
\end{cases}
\end{equation}
reads
\begin{equation} \label{eq:2.6}
\tilde{y}(t,x) =
\displaystyle\frac{1}{2}\int_{x-t}^{x+t}{v(\tau)}d\tau,
\end{equation}
where $v(x)$ is a $C^1$-smooth function to be determined, which
stands for the initial velocity.

We next show that there indeed exists an initial velocity $v(x)$
such that the solution $\tilde{y} = \tilde{y}(t,x)$ of the Cauchy
problem \eqref{eq:2.5}, defined by \eqref{eq:2.6}, satisfies the
terminal condition in the TBVP \eqref{eq:2.4}, i.e.,
\begin{equation} \label{eq:2.7}
\tilde{y}(T,x) = \tilde{f}(x).\end{equation}

To do so, we construct a $C^1$ function $u(x)$ on $[-T, T]$ which
satisfies
\begin{equation} \label{eq:2.8}
\int_{-T}^T u(\tau)d\tau = 2\tilde{f}(0), \quad  u(T) - u(-T) =
2\tilde{f}'(0) \quad \text{and}\quad u'_-(T) - u'_+(-T) =
2\tilde{f}''(0).
\end{equation}
Define
\begin{equation} \label{eq:2.9}
v(x) =
\begin{cases}
u(x-2NT) + 2\displaystyle\sum_{i=1}^N\tilde{f}'\big(x-(2i-1)T\big), \quad\forall\; x\geq 0,\\
u(x+2NT) - 2\displaystyle\sum_{i=1}^N\tilde{f}'\big(x+(2i-1)T\big),
\quad\forall\; x<0,
\end{cases}
\end{equation}
where $N$ is given by
$$N=\left[\displaystyle\frac{|x|+T}{2T}\right].$$
In \eqref{eq:2.9}, the terms including the summation disappear in
the case of $N=0$.

We claim that the function $v(x)$ defined by \eqref{eq:2.9} is
$C^1$-smooth.

In what follows, we distinguish two cases to show this fact.

{\bf Case A: $x\geq 0$.} $\quad$ In this case, for every $x\neq
(2N-1)T\;\; (N\in\mathds{N})$, by \eqref{eq:2.9} it is easy to check
that $v(x)$ is $C^1$-smooth at such a point $x$.

When $x=(2N-1)T$, it follows from \eqref{eq:2.9} that
\begin{equation} \label{eq:2.10}
\lim_{\alpha\rightarrow x^+}v(\alpha) = v(x) = u(-T) +
2\sum_{i=0}^{N-1}\tilde{f}'(2iT)
\end{equation}
and
\begin{equation} \label{eq:2.11}
\lim_{\alpha\rightarrow x^-}v(\alpha)  = \lim_{\alpha\rightarrow
x^-}u\big(\alpha-2(N-1)T\big) +
2\sum_{i=1}^{N-1}\lim_{\alpha\rightarrow
x^-}\tilde{f}'\big(\alpha-(2i-1)T\big) = u(T) +
2\sum_{i=1}^{N-1}\tilde{f}'(2iT).
\end{equation}
Combining \eqref{eq:2.10} and \eqref{eq:2.11} gives
\begin{equation} \label{eq:2.12}
v(x)- \lim_{\alpha\rightarrow x^-}v(\alpha)= u(-T) - u(T) +
2\tilde{f}'(0).
\end{equation}
Noting the second equation in \eqref{eq:2.8} yields the
continuity of $v(x)$ for all $x\geq 0$.\\

On the other hand, it follows from \eqref{eq:2.9} that
\begin{equation} \label{eq:2.13}
v'_-(x) = u'_-\big(x-2(N-1)T\big) +
2\sum_{i=1}^{N-1}\tilde{f}''\big(x-(2i-1)T\big)= u'_-(T) +
2\sum_{i=1}^{N-1}\tilde{f}''\big((2N-2i)T\big)
\end{equation}
and
\begin{equation} \label{eq:2.14}
v'_+(x) = u'_+(x-2NT) + 2\sum_{i=1}^N\tilde{f}''\big(x-(2i-1)T\big)=
u'_+(-T) + 2\sum_{i=1}^N\tilde{f}''\big((2N-2i)T\big).
\end{equation}
Combining \eqref{eq:2.13} and \eqref{eq:2.14} gives
\begin{equation} \label{eq:2.15}
v'_+(x) - v'_-(x) = u'_+(-T) - u'_-(T) + 2\tilde{f}''(0).
\end{equation}
Noting the third equation in \eqref{eq:2.8}, we have
\begin{equation} \label{eq:2.16}
v'_+(x) = v'_-(x).
\end{equation}
Summarizing the above argument yields that the function $v(x)$
defined by \eqref{eq:2.9} is $C^1$-smooth for all $x\geq 0$.

{\bf Case B: $x< 0$.} $\quad$ Similarly, we can prove that the
function $v(x)$ is $C^1$-smooth in the present situation.

Combining Cases A and B, we have shown that the function $v(x)$
defined by \eqref{eq:2.9} is a $C^1$-smooth function of
$x\in\mathds{R}$, and then the solution $\tilde{y}=\tilde{y}(t,x)$,
defined by \eqref{eq:2.6}, of the Cauchy problem \eqref{eq:2.5} is
$C^2$-smooth on the whole upper plane
$\mathds{R}^+\times\mathds{R}$.

We now claim that, for the initial velocity $v(x)$ defined by
\eqref{eq:2.9}, the solution $\tilde{y}=\tilde{y}(t,x)$ defined by
\eqref{eq:2.6} satisfies the terminal condition \eqref{eq:2.7}.

In fact, we verify this statement by distinguishing the following
two cases:

{\bf Case 1: $x\geq 0$.} $\quad$ In the present situation, it holds
that
$$x\leq 2NT+T \leq x+2T.$$
Thus it follows from \eqref{eq:2.9} that
\begin{align} \label{eq:2.17}
\displaystyle\frac{1}{2}\int_{x}^{x+2T}{v(\tau)}d\tau
&=\; \frac{1}{2}\left[\int_{x}^{2NT+T}{v(\tau)}d\tau + \int_{2NT+T}^{x+2T}{v(\tau)}d\tau\right] \nonumber \\
&=\; \int_{x}^{2NT+T}\left[\frac{1}{2}u(\tau-2NT) +
\sum_{i=1}^N\tilde{f}'\big(\tau-(2i-1)T\big)\right]d\tau +\nonumber \\
&\quad\; \int_{2NT+T}^{x+2T}\left[\frac{1}{2}u\big(\tau-2(N+1)T\big)
+
\sum_{i=1}^{N+1}\tilde{f}'\big(\tau-(2i-1)T\big)\right]d\tau \nonumber \\
&=\;\frac{1}{2}\left[\int_{x}^{2NT+T}u(\tau)d\tau +
\int_{2NT+T}^{x+2T}u(\tau)d\tau \right]+ \nonumber \\
&\;\quad \sum_{i=1}^N\left[\tilde{f}\big((2N-2i+2)T\big) -
\tilde{f}\big(x-(2i-1)T\big)\right] +\nonumber \\
&\;\quad \sum_{i=1}^{N+1}\left[\tilde{f}\big(x-(2i-3)T\big) -
\tilde{f}\big((2N-2i+2)T\big)\right] \nonumber \\
&=\;\frac{1}{2}\left[\int_{x-2NT}^{T}u(\tau)d\tau +
\int_{-T}^{x-2NT}u(\tau)d\tau \right] +\nonumber \\
&\;\quad \sum_{i=1}^N\left[\tilde{f}\big(x-(2i-3)T\big) -
\tilde{f}\big(x-(2i-1)T\big)\right] +\nonumber \\
&\;\quad  \tilde{f}\big(x-(2N-1)T\big)- \tilde{f}(0) \nonumber \\
&= \;\frac{1}{2}\int_{-T}^T{u(\tau)}d\tau + \tilde{f}(x+T) -
\tilde{f}(0).
\end{align}
Noting the first condition of \eqref{eq:2.8}, we obtain
\begin{equation} \label{eq:2.18}
\displaystyle\frac{1}{2}\int_{x}^{x+2T}{v(\tau)}d\tau =
\tilde{f}(x+T).
\end{equation}
This leads to
\begin{equation} \label{eq:2.19}
\tilde{y}(T,x) =
\displaystyle\frac{1}{2}\int_{x-T}^{x+T}{v(\tau)}d\tau =
\tilde{f}(x).
\end{equation}
This is the desired terminal condition \eqref{eq:2.7} for the case
of $x\geq 0$.

{\bf Case 2: $x< 0$.} $\quad$ In a similar manner, we can prove the
terminal condition \eqref{eq:2.7} holds for all $x<0$.

The above discussion shows that $\tilde{y}=\tilde{y}(t,x)$ defined
by \eqref{eq:2.6} is a $C^2$-smooth solution of the TBVP
\eqref{eq:2.4} defined on the strip $[0,T]\times\mathds{R}$. This
proves Theorem \ref{thm:2.1}.
\end{proof}

\begin{Remark} \label{rem:2.1}
In order to illustrate that it is easy to construct the function $u$
satisfying \eqref{eq:2.8}, in this remark we present two examples.
The first example reads
\begin{equation} \label{eq:2.20}
u=\frac{\tilde{f}''(0)}{2T}x^2+\frac{\tilde{f}'(0)}{T}x+\frac{\tilde{f}(0)}{T}
-\frac{\tilde{f}''(0)T}{6}.
\end{equation}
It is easy to check that the function $u$ defined by \eqref{eq:2.20}
satisfies all conditions in \eqref{eq:2.8}. The second example is
\begin{equation} \label{eq:2.21}
u(x) =
\begin{cases}
\tilde{h}+\displaystyle\frac{h}{2}+\frac{h}{2}\sin{\left(\frac{\pi}{T}x+\frac{\pi}{2}\right)},
\quad\forall\; x\in [-T, 0),\\
\tilde{h}+h+\displaystyle\frac{1}{T}\tilde{f}''(0)x^2,
\quad\forall\; x\in [0, T],
\end{cases}
\end{equation}
where $$ h = 2\tilde{f}'(0) - T \tilde{f}''(0) \quad \text{and}\quad
\tilde{h} = \displaystyle\frac{\tilde{f}(0)}{T}
+\left(\frac{7}{12}T\tilde{f}''(0) - \frac{3}{2}
\tilde{f}'(0)\right).$$ It is easy to verify that the function $u$
defined by \eqref{eq:2.21} also satisfies all conditions in
\eqref{eq:2.8}.
\end{Remark}

\begin{Remark} \label{rem:2.2}
The solution of the TBVP \eqref{eq:2.4} (equivalently,
\eqref{eq:2.2}) is not unique. In fact, by the definition of $u(x)$
we observe that such a function $u$ is not unique. This results the
non-uniqueness of the initial velocity $v(x)$ and then the solution
$\tilde{y}=\tilde{y}(t,x)$. For example, choose a $C^1$-smooth
function $v_1(x)$ satisfying
\begin{equation} \label{eq:2.22}
\int_{x-T}^{x+T}{v_1(\tau)}d\tau = 0.
\end{equation}
This implies that $v_1(x)$ is a $2T$-periodic function and satisfies
\begin{equation} \label{eq:2.23}
\int_{-T}^{T}{v_1(\tau)}d\tau = 0.
\end{equation}
Obviously, the function
\begin{equation} \label{eq:2.24}
\tilde{y}(t,x) =
\displaystyle\frac{1}{2}\int_{x-t}^{x+t}\big(v(\tau)+v_1(\tau)\big)d\tau
\end{equation}
also gives a $C^2$-smooth solution of the TBVP \eqref{eq:2.4}.
\end{Remark}

The idea to construct $v(x)$ (see \eqref{eq:2.9}) essentially comes
from the {\it characteristic-quadrilateral identity} given in
\cite{kong3}. In fact, using the characteristic-quadrilateral
identity, we have
\begin{equation} \label{eq:2.25}
\tilde{y}(A) + \tilde{y}(D) = \tilde{y}(B_1) + \tilde{y}(C_1).
\end{equation}
See Figure 1. By the the initial condition in the TBVP
\eqref{eq:2.4}, we get
\begin{equation}\label{eq:2.26}
\tilde{y}(A) = \tilde{y}(B_1) + \tilde{y}(C_1).
\end{equation}
Similarly, we obtain
\begin{equation}\label{eq:2.27}
\tilde{y}(B_1) =  \tilde{y}(B_2) + \tilde{y}(C_2),\quad \cdots,\quad
\tilde{y}(B_{N-1}) =  \tilde{y}(B_N) + \tilde{y}(C_N).
\end{equation}
On the other hand, it follows from \eqref{eq:2.6} that
\begin{equation}\label{eq:2.28}
\tilde{y}(A)  =
\displaystyle\frac{1}{2}\int_{-T}^{x}{v(\tau)}d\tau,\quad
\tilde{y}(B_N) =
\displaystyle\frac{1}{2}\int_{-T}^{x-2NT}{v(\tau)}d\tau.
\end{equation}
Combining \eqref{eq:2.26}-\eqref{eq:2.28} gives the definition of
$v(x)$ shown by \eqref{eq:2.9}.

\begin{center}
\begin{figure}[htb]
\begin{minipage}[t]{1\linewidth}
\vspace{0pt}
 \centering
\centerline{
 \psfig{figure=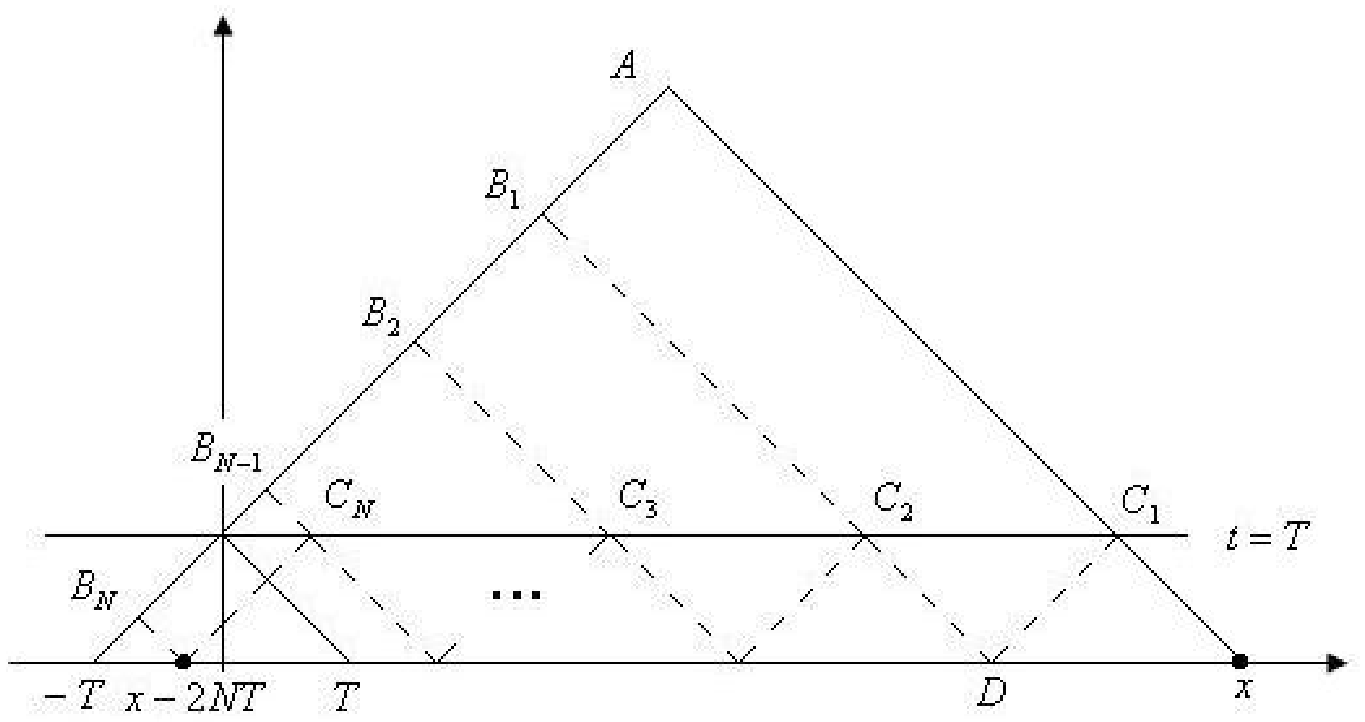,height=7cm}}
 \caption{The method to construct the initial velocity $v(x)$ by
characteristic-quadrilaterals}
\end{minipage}
\end{figure}
\end{center}

\begin{Remark} \label{rem:2.3}
In fact, the existence of the $C^2$-solution of the TBVP
\eqref{eq:2.4} is equivalent to the existence of the $C^1$-solution
of the following integral equation
\begin{equation}\label{eq:2.29}
\int_{x-T}^{x+T}{v(\tau)}d\tau = \tilde{f}(x).
\end{equation}
\eqref{eq:2.29} is a typical example of {\it Volterra integral
equations of the first kind}. By the theory of Volterra integral
equations, we can obtain the existence of the $C^1$-solution of the
integral equation \eqref{eq:2.29}. However, by this theory we do not
know how to construct the desired solution. But, in our theory we
present a direct method to construct the desired solution of the
TBVP \eqref{eq:2.1}.
\end{Remark}

\section{Wave equations in several space
variables} \label{sec:3}

In this section, we study the global exact controllability of wave
equations in several space variables and prove Theorem
\ref{thm:1-1.1}.

Consider the TBVP \eqref{eq:1-1.1}-\eqref{eq:1-1.3}, i.e., the
following TBVP
\begin{equation} \label{eq:3.1}
\begin{cases}
\square\, y(t,x) = 0,\\
y(0,x) = f(x),\\
y(T,x) = g(x),
\end{cases}
\end{equation}
where $(t,x)\in\mathds{R}^+\times\mathds{R}^n$, the symbol $\square$
stands for the d'Alembert's operator, i.e.,
$$\square =\partial_t^2-\sum_{i=1}^n\partial_i^2= \partial_t^2-\Delta_x,$$
and $f(x),g(x)\in C^{[n/2]+2}(\mathds{R}^{n})$ are two given
functions defined on $\mathds{R}^n$. In \eqref{eq:3.1}, without loss
of generality, we assume that the propagation speed $c$ of wave is
1. Thus, in order to prove Theorem \ref{thm:1-1.1}, it suffice to
show the following theorem.
\begin{Theorem}\label{thm:3.1}
The TBVP \eqref{eq:3.1} has a $C^2$-smooth solution $y = y(t,x)$
defined on the strip $[0,T]\times\mathds{R}^n$.
\end{Theorem}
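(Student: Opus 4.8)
The plan is to recast the terminal-value problem as a Cauchy problem in which the initial velocity $v(x):=y_t(0,x)$ is the free parameter to be determined, exactly as in the one-dimensional argument behind Theorem \ref{thm:2.1}. First I would write the solution of the Cauchy problem $\square y=0$, $y(0,\cdot)=f$, $y_t(0,\cdot)=v$ by the classical Poisson--Kirchhoff representation through the spherical means $\bar f(x,r)=\frac{1}{\omega_{n-1}}\int_{S^{n-1}}f(x+r\omega)\,d\omega$ and $\bar v(x,r)$. This representation applies the operator $\big(\frac{1}{r}\partial_r\big)^{[n/2]-1}$ (with one further $\partial_t$ acting on the $f$-term, and the method of descent when $n$ is even) to $r^{n-2}\bar f$ and $r^{n-2}\bar v$, evaluated at radius $r=t$; since producing the solution costs about $[n/2]$ derivatives of the data and two more are needed for $y\in C^2$, this is exactly the origin of the hypothesis $f,g\in C^{[n/2]+2}(\mathds{R}^n)$. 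Setting $t=T$ and using $y(0,\cdot)=f$, the terminal condition $y(T,\cdot)=g$ then reduces to a deconvolution equation of the schematic form $K_T*v=\tilde g$ for the unknown velocity, where $K_T$ is the wave propagator kernel at time $T$ (supported on the sphere, resp.\ ball, of radius $T$) and $\tilde g=g-(\text{the known }f\text{-contribution})$.

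The core step is to solve this deconvolution equation constructively, and here I would reduce to the one-dimensional Theorem \ref{thm:2.1}. The mechanism is the Euler--Poisson--Darboux equation: if $y$ solves $\square y=0$, then its spherical mean $\bar y(x,r,t)$ satisfies $\bar y_{tt}=\bar y_{rr}+\frac{n-1}{r}\bar y_r$, and the substitution $w(x,r,t)=\big(\frac{1}{r}\partial_r\big)^{[n/2]-1}\big(r^{n-2}\bar y(x,r,t)\big)$ converts this, for odd $n$, into the genuine one-dimensional wave equation $w_{tt}=w_{rr}$ on the half-line $r\ge 0$ with $w\equiv 0$ at $r=0$. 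Thus for each fixed center $x$ the data $\bar f,\bar v$ together with the target value $y(T,x)=g(x)$ translate into a one-dimensional two-point boundary value problem in $(r,t)$, to which the explicit construction of the initial velocity in Theorem \ref{thm:2.1} applies after an odd reflection across $r=0$. Reassembling the resulting velocities into one function $v$ on $\mathds{R}^n$ and substituting it back into the Poisson representation would then produce a $C^2$ solution $y$ on $[0,T]\times\mathds{R}^n$ meeting both conditions in \eqref{eq:3.1}.

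I expect two points to carry the genuine difficulty. The first is the invertibility of the spherical-mean (deconvolution) operator: in the tempered/Fourier picture its symbol is $\frac{\sin(T\rho)}{\rho}$ with $\rho=|\xi|$, which vanishes on the spheres $\rho=k\pi/T$, so no decaying $v$ exists for generic $g$. This resonance obstruction has to be evaded exactly as in one dimension --- by allowing $v$ to grow at spatial infinity, growth that the one-dimensional formula \eqref{eq:2.9} already exhibits through its $N\sim|x|/2T$ summation --- which is what lets the statement hold for \emph{every} $T>0$ rather than only for non-resonant times. The second is the parity of $n$: for odd $n$ the reduction to $w_{tt}=w_{rr}$ is exact, whereas for even $n$ one must first regard the data as independent of an extra variable, solve in dimension $n+1$ by the method of descent, and then carry the additional integration through the regularity count; this is the source of the $[n/2]$ (rather than $n-1$) derivatives and the most delicate bookkeeping in checking $y\in C^2$.
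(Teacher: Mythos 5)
Your proposal follows essentially the same route as the paper's proof: for odd $n$ the spherical-mean (Euler--Poisson--Darboux) substitution $w=\left(\tfrac{1}{r}\,\partial_r\right)^{k-1}\left(r^{2k-1}\mathcal{A}_r y\right)$ reduces the problem to the one-dimensional Theorem \ref{thm:2.1}, and even $n$ is handled by Hadamard's method of descent, exactly as you describe. The only difference is bookkeeping rather than substance: the paper solves the reduced one-dimensional TBVP \eqref{eq:3.4} for each center $x$ and recovers the solution directly via $y(t,x)=\lim_{r\to 0}\frac{1}{c_0 r}w(t,r)$, instead of first reassembling an initial velocity $v$ on $\mathds{R}^n$ and re-substituting it into the Poisson--Kirchhoff representation as you propose.
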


\begin{Remark}\label{rem:3.1}
Theorem \ref{thm:3.1} implies the global exact controllability of
wave equations in several space variables. Similar result is true
for the following inhomogeneous wave equations $$ \square\, y(t,x) =
F(t,x).$$\end{Remark}

We next give the proof of Theorem \ref{thm:3.1} (equivalently,
Theorem \ref{thm:1-1.1}).

\begin{proof} We distinguish two cases to prove Theorem \ref{thm:3.1}.

{\bf Case A: $n=2k+1\; (k\in\mathds{N})$}

Define the spherical mean of any given function $h(x)$ defined on
$\mathds{R}^n$:
\begin{equation} \label{eq:3.2}
\mathcal{A}_r h(x) =
\displaystyle\frac{1}{\omega_{n-1}}\int_{S^{n-1}}{h(x+ry)}d\sigma(y),
\end{equation}
where $\omega_{n-1}$ denotes the area of the unit sphere
$S^{n-1}\subset\mathds{R}^n$ and $d\sigma(y)$ stands for the
Lebesgue measure on the unit sphere $S^{n-1}$. Let
\begin{equation} \label{eq:3.3}
w(t,r) = \left(\displaystyle\frac{1}{r}\frac{\partial}{\partial
r}\right)^{k-1}\left(r^{2k-1}\mathcal{A}_r y(t,x)\right).
\end{equation}
It is easy to check that the function $w(t,r)$ satisfies the
following TBVP
\begin{equation} \label{eq:3.4}
\begin{cases}
w_{tt}- w_{rr} = 0,\\
w(0,r) = \left(\displaystyle\frac{1}{r}\frac{\partial}{\partial
r}\right)^{k-1}\left(r^{2k-1}\mathcal{A}_r f(x)\right),\\
w(T,r) = \left(\displaystyle\frac{1}{r}\frac{\partial}{\partial
r}\right)^{k-1}\left(r^{2k-1}\mathcal{A}_r g(x)\right).
\end{cases}
\end{equation}
Thus, by Theorem \ref{thm:2.1}, the TBVP \eqref{eq:3.4} has a
$C^2$-smooth solution defined on the domain $[0,T]\times\mathds{R}$.

Denote
\begin{equation} \label{eq:3.5}
c_0 = \prod_{i=1}^k{(2i-1)}.
\end{equation}
Then we have
\begin{equation} \label{eq:3.6}
y(t,x) = \lim_{r\rightarrow 0}\mathcal{A}_r y(t,x) =
\lim_{r\rightarrow 0} \displaystyle\frac{1}{c_0 r}w(t,r).
\end{equation}
It is easy to verify that the function $y=y(t,x)$ defined by
\eqref{eq:3.6} is a $C^2$-smooth solution of the TBVP \eqref{eq:3.1}
on the strip $[0,T]\times\mathds{R}^n$.

{\bf Case B: $n=2k\; (k\in\mathds{N})$}

By Hadamard's method of descent, we can also prove that the TBVP
\eqref{eq:3.1} has a $C^2$-smooth solution on the strip
$[0,T]\times\mathds{R}^n$. The main idea here is that if $y$ solves
a wave equation with $n$ space variables, then it is also a solution
of the corresponding wave equation with $n+1$ space variables, which
happens to be independent of the last variable $x_{n+1}$. Here we
omit the details.  Thus the proof of Theorem \ref{thm:3.1} is
completed.
\end{proof}

\begin{Remark} \label{rem:3.2}
Noting Remark \ref{rem:2.2}, we know that the solution of the TBVP
\eqref{eq:3.1} does not possesses the uniqueness.\end{Remark}

\section{Wave equation defined on a closed curve}\label{sec:4}

This section concerns the exact controllability of the wave equation
defined on a circle. In other words, in this section we prove
Theorem \ref{thm:1-2.1} and then Theorem \ref{thm:1-2.2}.

To do so, we need the following Lemma (see Kong \cite{kong}).

\begin{Lemma} \label{lem:4.1}
Suppose that $F(x)$ is a $L$-periodic $C^2$ function of
$x\in\mathds{R}$, and its derivative of third order, i.e.,
$F'''(x)$, is piecewise smooth. Suppose furthermore that the Fourier
series of $F(x)$ is given by
\begin{equation} \label{eq:4.1}
F(x) = \displaystyle\frac{1}{2}A_0 + \sum_{k=1}^\infty{A_k
\cos{\left(\frac{2k\pi}{L}x\right)}} + \sum_{k=1}^\infty{B_k
\sin{\left(\frac{2k\pi}{L}x\right)}},
\end{equation}
where
$$A_0=\displaystyle\frac{2}{L}\int_0^L{f(x)}dx,\quad
A_k=\displaystyle\frac{2}{L}\int_0^L{f(x)\cos{\left(\frac{2k\pi}{L}x\right)}}dx,\quad
B_k=\frac{2}{L}\int_0^L{f(x)\sin{\left(\frac{2k\pi}{L}x\right)}}dx.$$
Then the series $\displaystyle\sum_{k=1}^\infty{|k^2 A_k|}$ and
$\displaystyle\sum_{k=1}^\infty{|k^2 B_k|}$ are convergent.
\end{Lemma}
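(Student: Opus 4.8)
The plan is to show that multiplying the Fourier coefficients of $F$ by $k^{2}$ produces, up to constants, the Fourier coefficients of $F''$, and that these are absolutely summable because $F''$ is continuous with a piecewise-smooth derivative. Write $\omega_k=2k\pi/L$. Since $F$ is $L$-periodic and $C^{2}$, I would integrate by parts twice in the defining integrals
\[
A_k=\frac{2}{L}\int_0^L F(x)\cos(\omega_k x)\,dx,\qquad
B_k=\frac{2}{L}\int_0^L F(x)\sin(\omega_k x)\,dx .
\]
In each integration by parts the boundary contributions vanish: the ones carrying a $\sin(\omega_k x)$ factor vanish because $\sin(\omega_k\cdot 0)=\sin(\omega_k L)=0$, and the ones carrying a $\cos(\omega_k x)$ factor vanish because $\cos(\omega_k L)=\cos 0=1$ together with the periodicity identities $F(L)=F(0)$ and $F'(L)=F'(0)$. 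The outcome is $k^{2}A_k=-\frac{L^{2}}{4\pi^{2}}a_k''$ and $k^{2}B_k=-\frac{L^{2}}{4\pi^{2}}b_k''$, where $a_k''$ and $b_k''$ denote the cosine and sine Fourier coefficients of $F''$ on $[0,L]$. Thus it suffices to prove that $\sum_k|a_k''|$ and $\sum_k|b_k''|$ converge.

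Next I would perform one further integration by parts to bring in $F'''$. Here $F''$ is continuous and $L$-periodic, while $F'''$ exists and is smooth only piecewise, so I would first partition one period by the finitely many points $0=x_0<x_1<\cdots<x_m=L$ at which $F'''$ fails to be smooth, integrate by parts on each subinterval $[x_{j-1},x_j]$, and then sum. The interior boundary terms at each $x_j$ cancel in pairs because $F''$ is continuous there and the trigonometric factor is single-valued, while the contributions at $0$ and $L$ vanish by periodicity exactly as before. This yields $a_k''=-\frac{L}{2k\pi}\,b_k'''$ and $b_k''=\frac{L}{2k\pi}\,a_k'''$, where $a_k'''$ and $b_k'''$ are the Fourier coefficients of $F'''$.

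Finally I would estimate $\sum_k|a_k''|=\frac{L}{2\pi}\sum_k\frac{|b_k'''|}{k}$ (and likewise for $b_k''$) by the Cauchy--Schwarz inequality,
\[
\sum_{k=1}^{\infty}\frac{|b_k'''|}{k}\le\Big(\sum_{k=1}^{\infty}\frac{1}{k^{2}}\Big)^{1/2}\Big(\sum_{k=1}^{\infty}|b_k'''|^{2}\Big)^{1/2}.
\]
The first factor equals $\pi/\sqrt{6}$, and the second is finite because $F'''$, being piecewise smooth on the bounded interval $[0,L]$, is bounded and hence lies in $L^{2}(0,L)$; Bessel's inequality then gives $\sum_k\big(|a_k'''|^{2}+|b_k'''|^{2}\big)<\infty$. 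Combining the three steps, $\sum_k|k^{2}A_k|$ and $\sum_k|k^{2}B_k|$ converge, which is the assertion.

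The only delicate point, and the step I would treat most carefully, is the third integration by parts across the discontinuities of $F'''$: one must split the period at the breakpoints and verify that the interior boundary terms cancel, which is precisely where the continuity of $F''$ (that is, the full $C^{2}$ hypothesis on $F$) is used; everything else is routine Fourier bookkeeping.
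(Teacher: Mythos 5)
Your proof is correct and takes essentially the same route as the paper: both arguments gain three powers of $k$ by integration by parts, identifying $k^3 A_k$ and $k^3 B_k$ (up to constants) with Fourier coefficients of $F'''$ (the paper does this by splitting $F$ into its odd part $G$ and even part $H$ and deriving $B_k^{(3)}=-\left(\frac{2k\pi}{L}\right)^3 B_k$, while you work with $A_k,B_k$ directly), and then both conclude via an $L^2$ bound on those coefficients (Parseval/Bessel) combined with Cauchy--Schwarz against $\sum_{k=1}^{\infty}\frac{1}{k^{2}}$. Your explicit splitting of the period at the breakpoints of $F'''$ in the third integration by parts is a point of rigor the paper passes over silently, but it does not change the substance of the argument.
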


\begin{proof} It follows from \eqref{eq:4.1} that
\begin{equation} \label{eq:4.2}
\begin{cases}
\displaystyle\sum_{k=1}^\infty{B_k
\sin{\left(\frac{2k\pi}{L}x\right)}}=\frac{1}{2}\big(F(x)-F(-x)\big)\triangleq G(x),\\
\displaystyle\frac{1}{2}A_0 + \sum_{k=1}^\infty{A_k
\cos{\left(\frac{2k\pi}{L}x\right)}}=\frac{1}{2}\big(F(x)+F(-x)\big)\triangleq
H(x).
\end{cases}
\end{equation}
Since $G(x)$ is a $L$-periodic odd function, its derivative of third
order, i.e., $G'''(x)$, is a $L$-periodic piecewise continuous even
function. So the form of the Fourier series of $G'''(x)$ should be
\begin{equation} \label{eq:4.3}
G'''(x) = \displaystyle\frac{1}{2}B_0^{(3)} +
\sum_{k=1}^\infty{B_k^{(3)} \cos{\left(\frac{2k\pi}{L}x\right)}},
\end{equation}
where $B_0^{(3)}$ and $B_k^{(3)}\;(k=1,2,\cdots)$ stand for the
coefficients of the Fourier series. By Parseval inequality, we
obtain
\begin{equation} \label{eq:4.4}
\displaystyle\frac{1}{2}\left(B_0^{(3)}\right)^2 +
\sum_{k=1}^\infty{\left(B_k^{(3)}\right)^2}=\frac{2}{L}\int_0^L{\left[G'''(x)\right]^2}dx<\infty.
\end{equation}
On the other hand,
\begin{align} \label{eq:4.5}
B_k^{(3)} &= \frac{2}{L}\int_0^L{G'''(x)\cos{\left(\frac{2k\pi}{L}x\right)}dx} \nonumber \\
&=
\frac{2}{L}\left[G''(x)\cos{\left(\frac{2k\pi}{L}x\right)}\right]\Big|_0^L
+
\frac{2k\pi}{L}\frac{2}{L}\int_0^L{G''(x)\sin{\left(\frac{2k\pi}{L}x\right)}dx} \nonumber \\
&=
\frac{2k\pi}{L}\frac{2}{L}\left[G'(x)\sin{\left(\frac{2k\pi}{L}x\right)}\right]\Big|_0^L
-
\left(\frac{2k\pi}{L}\right)^2\frac{2}{L}\int_0^L{G'(x)\cos{\left(\frac{2k\pi}{L}x\right)}dx} \nonumber \\
&=
-\left(\frac{2k\pi}{L}\right)^2\frac{2}{L}\left[G(x)\cos{\left(\frac{2k\pi}{L}x\right)}\right]\Big|_0^L
-
\left(\frac{2k\pi}{L}\right)^3\frac{2}{L}\int_0^L{G(x)\sin{\left(\frac{2k\pi}{L}x\right)}dx} \nonumber \\
&=-\left(\frac{2k\pi}{L}\right)^3B_k.
\end{align}
Here we have made use of the fact that $G(x)$ and $G''(x)$ are
$L$-periodic odd functions. So
\begin{equation} \label{eq:4.6}
\displaystyle\sum_{k=1}^\infty{\left(k^3 B_k\right)^2}=
\left(\frac{L}{2\pi}\right)^6\sum_{k=1}^\infty{\left(B_k^{(3)}\right)^2}
<\infty.
\end{equation}
Then by Cauchy inequality, it yields
\begin{equation} \label{eq:4.7}
\displaystyle\sum_{k=1}^\infty{\left|k^2 B_k\right|} \leq
\sqrt{\sum_{k=1}^\infty{\left(k^3 B_k\right)^2}\cdot
\sum_{k=1}^\infty{\frac{1}{k^2}}}<\infty.
\end{equation}

Similarly, we can show that
$\displaystyle\sum_{k=1}^\infty{\left|k^2 A_k\right|}$ is
convergent.

Thus, the proof of Lemma \ref{lem:4.1} is completed.
\end{proof}

\begin{Remark} \label{rem:4.1}
Obviously, it follows from Lemma \ref{lem:4.1} that the series
$$\displaystyle\sum_{k=1}^\infty{|A_k|},\quad
\displaystyle\sum_{k=1}^\infty{|B_k|},\quad
\displaystyle\sum_{k=1}^\infty{|k A_k|},\quad
\displaystyle\sum_{k=1}^\infty{|k B_k|}$$ are convergent.
\end{Remark}

We now prove Theorem \ref{thm:1-2.1}.

\begin{proof}
Suppose that the Fourier series of the solution $y=y(t,\theta)$ to
\eqref{eq:1-2.5} reads
\begin{equation} \label{eq:4.8}
y(t,\theta) = \displaystyle\frac{1}{2}a_0(t) +
\sum_{k=1}^\infty{a_k(t) \cos{\left(\frac{2k\pi}{L}\theta\right)}} +
\sum_{k=1}^\infty{b_k(t) \sin{\left(\frac{2k\pi}{L}\theta\right)}},
\end{equation}
where $a_0(t),\;a_k(t),\;b_k(t)$ stand for the coefficients of the
Fourier series. Then by the first equation in \eqref{eq:1-2.5}, we
obtain
\begin{equation} \label{eq:4.9}
\begin{cases}
a_0''(t)=0,\\
a_k''(t)+\displaystyle\left(\frac{2k\pi}{L}\right)^2a_k(t)=0,\\
b_k''(t)+\displaystyle\left(\frac{2k\pi}{L}\right)^2b_k(t)=0.
\end{cases}
\end{equation}
In particular,
\begin{equation} \label{eq:4.10}
\begin{cases}
f(\theta) = \displaystyle\frac{1}{2}a_0(0) +
\sum_{k=1}^\infty{a_k(0) \cos{\left(\frac{2k\pi}{L}\theta\right)}} +
\sum_{k=1}^\infty{b_k(0) \sin{\left(\frac{2k\pi}{L}\theta\right)}},\\
g(\theta) = \displaystyle\frac{1}{2}a_0(T) +
\sum_{k=1}^\infty{a_k(T) \cos{\left(\frac{2k\pi}{L}\theta\right)}} +
\sum_{k=1}^\infty{b_k(T) \sin{\left(\frac{2k\pi}{L}\theta\right)}},
\end{cases}
\end{equation}
in which
\begin{equation} \label{eq:4.11}
\begin{cases}
a_0(0)=\displaystyle\frac{2}{L}\int_0^L{f(x)}dx,\\
a_0(T)=\displaystyle\frac{2}{L}\int_0^L{g(x)}dx,
\end{cases}
\end{equation}
\begin{equation} \label{eq:4.12}
\begin{cases}
a_k(0)=\displaystyle\frac{2}{L}\int_0^L{f(x)\cos{\left(\frac{2k\pi}{L}x\right)}}dx,\\
a_k(T)=\displaystyle\frac{2}{L}\int_0^L{g(x)\cos{\left(\frac{2k\pi}{L}x\right)}}dx,
\end{cases}
\end{equation}
and
\begin{equation} \label{eq:4.13}
\begin{cases}
b_k(0)=\displaystyle\frac{2}{L}\int_0^L{f(x)\sin{\left(\frac{2k\pi}{L}x\right)}}dx,\\
b_k(T)=\displaystyle\frac{2}{L}\int_0^L{g(x)\sin{\left(\frac{2k\pi}{L}x\right)}}dx.
\end{cases}
\end{equation}
It follows from \eqref{eq:4.9} that
\begin{equation} \label{eq:4.14}
\begin{cases}
a_0(t)=\alpha_0 + \beta_0 t,\\
a_k(t)=\displaystyle\alpha_k\cos{\left(\frac{2k\pi}{L}t\right)} +
\beta_k\sin{\left(\frac{2k\pi}{L}t\right)},\\
b_k(t)=\displaystyle\bar{\alpha}_k\cos{\left(\frac{2k\pi}{L}t\right)}
+ \bar{\beta}_k\sin{\left(\frac{2k\pi}{L}t\right)},
\end{cases}
\end{equation}
where $\alpha_0,\;\beta_0,\; \alpha_k,\;\beta_k,\;\bar{\alpha}_k$
and $\bar{\beta}_k$ are some constants. Comparing
\eqref{eq:4.11}-\eqref{eq:4.13} with \eqref{eq:4.14} gives
\begin{equation} \label{eq:4.15}
\begin{cases}
\alpha_0=\displaystyle\frac{2}{L}\int_0^L{f(x)}dx,\\
\alpha_0 + \beta_0 T=\displaystyle\frac{2}{L}\int_0^L{g(x)}dx,
\end{cases}
\end{equation}
\begin{equation} \label{eq:4.16}
\begin{cases}
\alpha_k=\displaystyle\frac{2}{L}\int_0^L{f(x)\cos{\left(\frac{2k\pi}{L}x\right)}}dx,\\
\displaystyle\alpha_k\cos{\left(\frac{2k\pi}{L}T\right)} +
\beta_k\sin{\left(\frac{2k\pi}{L}T\right)}=\displaystyle\frac{2}{L}\int_0^L{g(x)\cos{\left(\frac{2k\pi}{L}x\right)}}dx
\end{cases}
\end{equation}
and
\begin{equation} \label{eq:4.17}
\begin{cases}
\bar{\alpha}_k=\displaystyle\frac{2}{L}\int_0^L{f(x)\sin{\left(\frac{2k\pi}{L}x\right)}}dx,\\
\displaystyle\bar{\alpha}_k\cos{\left(\frac{2k\pi}{L}T\right)} +
\bar{\beta}_k\sin{\left(\frac{2k\pi}{L}T\right)}=\displaystyle\frac{2}{L}\int_0^L{g(x)\sin{\left(\frac{2k\pi}{L}x\right)}}dx.
\end{cases}
\end{equation}
That is,
\begin{equation} \label{eq:4.18}
\begin{cases}
\alpha_0=\displaystyle\frac{2}{L}\int_0^L{f(x)}dx,\\
\beta_0
=\displaystyle\frac{2}{LT}\int_0^L{\left[g(x)-f(x)\right]}dx,
\end{cases}
\end{equation}
\begin{equation} \label{eq:4.19}
\begin{cases}
\alpha_k=\displaystyle\frac{2}{L}\int_0^L{f(x)\cos{\left(\frac{2k\pi}{L}x\right)}}dx,\\
\beta_k=
\begin{cases}
0,\quad {\rm if}\;\; \displaystyle\frac{2kT}{L}\in \mathds{N},\\
\displaystyle\frac{2}{L\sin{\left(\frac{2k\pi}{L}T\right)}}
\int_0^L{\left[g(x)-f(x)\cos{\left(\frac{2k\pi}{L}T\right)}\right]\cos{\left(\frac{2k\pi}{L}x\right)}}dx,\quad
{\rm if}\;\; \displaystyle\frac{2kT}{L}\not\in \mathds{N}
\end{cases}
\end{cases}
\end{equation}
and
\begin{equation} \label{eq:4.20}
\begin{cases}
\bar\alpha_k=\displaystyle\frac{2}{L}\int_0^L{f(x)\sin{\left(\frac{2k\pi}{L}x\right)}}dx,\\
\bar\beta_k=
\begin{cases}
0,\quad {\rm if}\;\;  \displaystyle\frac{2kT}{L}\in \mathds{N},\\
\displaystyle\frac{2}{L\sin{\left(\frac{2k\pi}{L}T\right)}}
\int_0^L{\left[g(x)-f(x)\cos{\left(\frac{2k\pi}{L}T\right)}\right]\sin{\left(\frac{2k\pi}{L}x\right)}}dx,\quad
{\rm if}\;\; \displaystyle\frac{2kT}{L}\not\in \mathds{N}.
\end{cases}
\end{cases}
\end{equation}
Here we would like to point out that, when
$\displaystyle\frac{2kT}{L}\in \mathds{N}$, i.e.,
$\displaystyle\sin{\left(\frac{2k\pi}{L}T\right)}=0$, in
\eqref{eq:4.19}-\eqref{eq:4.20} we take the corresponding
coefficients $\beta_k$ and $\bar\beta_k$ as zero. In fact, we may
also take $\beta_k$ and $\bar\beta_k$ as some series satisfying
\begin{equation} \label{eq:4.21}
\displaystyle\sum_{k=1}^\infty{|k^2\beta_k|}<\infty,\quad
\sum_{k=1}^\infty{|k^2 \bar\beta_k|}<\infty.
\end{equation} The
different choice of $\beta_k$ and $\bar\beta_k$ can give the
different solution. This implies that the solution of the TBVP under
consideration is not unique.

Since $\displaystyle\frac{T}{L}$ is a rational number and
$\displaystyle\frac{2T}{L}\not\in \mathds{N}$,
$\displaystyle\frac{2T}{L}$ can be expressed as a fraction
$\displaystyle\frac{p}{q}$, where $p$ and $q$ are two irreducible
integers and $q$ is not less than $2$, i.e., $q\geq 2$. By the
property of sinusoid, we have
\begin{equation} \label{eq:4.22}
\left|\displaystyle\sin{\left(\frac{2k\pi}{L}T\right)}\right|=
\left|\displaystyle\sin{\left(\frac{kp\pi}{q}\right)}\right|\geq
\left|\displaystyle\sin{\left(\frac{\pi}{q}\right)}\right|\triangleq
C_s,\quad \text{if $k$ is not the multiple of $q$}.
\end{equation}
So for the given $T$ and $L$, $C_s$ is a constant. Then it follows
from \eqref{eq:4.14}, \eqref{eq:4.19}, \eqref{eq:4.21} and Remark
\ref{rem:4.1} that
\begin{align} \label{eq:4.23}
\sum_{k=1}^\infty{\left|a_k(t)\right|}
&\leq\sum_{k=1}^\infty{\left(\left|\alpha_k\right|+\left|\beta_k\right|\right)} \nonumber \\
&\leq\displaystyle\left(1+\frac{1}{C_s}\right)
\sum_{k=1}^\infty{\left|\frac{2}{L}\int_0^L{f(x)\cos{\left(\frac{2k\pi}{L}x\right)}}dx\right|}
+\frac{1}{C_s}\sum_{k=1}^\infty{\left|\frac{2}{L}\int_0^L{g(x)\cos{\left(\frac{2k\pi}{L}x\right)}}dx\right|} \nonumber \\
&<\infty.
\end{align}

Similarly, we obtain from \eqref{eq:4.14}, \eqref{eq:4.20},
\eqref{eq:4.21} and Remark \ref{rem:4.1} that
\begin{equation} \label{eq:4.24}
\sum_{k=1}^\infty{\left|b_k(t)\right|} <\infty.
\end{equation}
Combining \eqref{eq:4.23}, \eqref{eq:4.24} and \eqref{eq:4.15} gives
the convergence of the Fourier series \eqref{eq:4.8} of the solution
$y=y(t,\theta)$ immediately.

Moreover, by Lemma \ref{lem:4.1}, Remark \ref{rem:4.1} and
\eqref{eq:4.21} we obtain
\begin{equation} \label{eq:4.25}
\sum_{k=1}^\infty{\left|k \alpha_k\right|} <\infty,\quad
\sum_{k=1}^\infty{\left|k \beta_k\right|} <\infty,\quad
\sum_{k=1}^\infty{\left|k \bar\alpha_k\right|} <\infty,\quad
\sum_{k=1}^\infty{\left|k \bar\beta_k\right|} <\infty
\end{equation}
and
\begin{equation} \label{eq:4.26}
\sum_{k=1}^\infty{\left|k^2 \alpha_k\right|} <\infty,\quad
\sum_{k=1}^\infty{\left|k^2 \beta_k\right|} <\infty,\quad
\sum_{k=1}^\infty{\left|k^2 \bar\alpha_k\right|} <\infty,\quad
\sum_{k=1}^\infty{\left|k^2 \bar\beta_k\right|} <\infty.
\end{equation}
Using \eqref{eq:4.25} and \eqref{eq:4.26}, by a similar argument as
used above, we can prove
\begin{equation} \label{eq:4.27}
\sum_{k=1}^\infty{\left|k a_k(t)\right|} <\infty,\quad
\sum_{k=1}^\infty{\left|k b_k(t)\right|} <\infty,\quad
\sum_{k=1}^\infty{\left|a_k'(t)\right|} <\infty,\quad
\sum_{k=1}^\infty{\left|b_k'(t)\right|} <\infty
\end{equation}
and
\begin{equation}\left\{\begin{array}{l} \label{eq:4.28}
{\displaystyle \sum_{k=1}^\infty{\left|k^2 a_k(t)\right|}
<\infty,\quad \sum_{k=1}^\infty{\left|k^2 b_k(t)\right|}
<\infty,\quad \sum_{k=1}^\infty{\left|k a_k'(t)\right|}
<\infty,\quad
\sum_{k=1}^\infty{\left|k b_k'(t)\right|} <\infty, }\vspace{2mm}\\
{\displaystyle \sum_{k=1}^\infty{\left|a_k''(t)\right|}
<\infty,\quad \sum_{k=1}^\infty{\left|b_k''(t)\right|} <\infty.}
\end{array}\right.\end{equation}
Obviously, \eqref{eq:4.27} and \eqref{eq:4.28} imply that the
Fourier series of the first-order derivatives of $y$ (i.e., $y_t$
and $y_{\theta}$) and second-order derivatives of $y$ (i.e.,
$y_{tt},\;y_{t\theta}$ and $y_{\theta\theta}$) are also convergent,
respectively.

Thus, the proof of Theorem \ref{thm:1-2.1} is completed.
\end{proof}

\begin{Remark} \label{rem:4.2}
By \eqref{eq:4.10}, \eqref{eq:4.14}, \eqref{eq:4.18}, the first
equation in \eqref{eq:4.19} and the first equation in
\eqref{eq:4.20}, if $\displaystyle\frac{2T}{L}\in \mathds{N}$, then
$g(\theta)$ satisfies
\begin{align} \label{eq:4.29}
g(\theta) &= \displaystyle\frac{1}{2}\left(\alpha_0+\beta_0 T\right)
+ \sum_{k=1}^\infty{\alpha_k \cos{\left(\frac{2k\pi T}{L}\right)}
\cos{\left(\frac{2k\pi}{L}\theta\right)}} +
\sum_{k=1}^\infty{\bar{\alpha}_k \cos{\left(\frac{2k\pi
T}{L}\right)} \sin{\left(\frac{2k\pi}{L}\theta\right)}} \nonumber \\
&=\displaystyle\frac{1}{L}\int_0^L{g(x)}dx +
\sum_{k=1}^\infty{\frac{2}{L}
\int_0^L{f(x)\cos{\left(\frac{2k\pi}{L}x\right)}}dx\cos{\left(\frac{2k\pi}{L}\theta\right)}\cos{\left(\frac{2k\pi
T}{L}\right)}} + \nonumber \\
&\qquad\qquad\qquad\quad\sum_{k=1}^\infty{\frac{2}{L}
\int_0^L{f(x)\sin{\left(\frac{2k\pi}{L}x\right)}}dx\sin{\left(\frac{2k\pi}{L}\theta\right)}
\cos{\left(\frac{2k\pi T}{L}\right)}}.
\end{align}
Then in this case, for any given $f(\theta)$, the terminal value
$g(\theta)$ can not be arbitrary. In particular, if
$\displaystyle\frac{T}{L}\in \mathds{N}$, then it should hold that
\begin{equation} \label{eq:4.30}
g(\theta) =
\displaystyle\frac{1}{L}\int_0^L{\left[g(x)-f(x)\right]}dx +
f(\theta).
\end{equation}
\end{Remark}

\begin{Remark} \label{rem:4.3}
If $\displaystyle\frac{T}{L}$ is a irrational number, then
\eqref{eq:4.22} is incorrect, and then we can not get the
convergence in \eqref{eq:4.23}, etc.
\end{Remark}

\begin{Remark} \label{rem:4.4}
Theorem \ref{thm:1-2.1} shows that, for some $T$ satisfied the
assumptions in Theorem \ref{thm:1-2.1}, there exist some initial
velocity $v(\theta)$ such that the Cauchy problem for the wave
equation in \eqref{eq:1-2.5} with the initial data
\begin{equation} \label{eq:4.31}
t=0:\;\;y=f(\theta),\quad y_t=v(\theta)
\end{equation}
has a solution $y=y(t,\theta)\in C^2([0,T]\times\mathds{R})$ which
satisfies the terminal condition in \eqref{eq:1-2.5}, i.e., the
third equation in \eqref{eq:1-2.5}.
\end{Remark}

Theorem \ref{thm:1-2.2} comes from Theorem \ref{thm:1-2.1} directly.

\section{Wave equation with homogeneous Dirichlet or Neumann boundary
conditions} \label{sec:5}

In this section, we investigate the TBVP for the wave equation
defined on the domain $[0,T]\times[0,L]$ with homogeneous Dirichlet
boundary conditions and homogeneous Neumann boundary conditions,
respectively, where $T,\;L>0$ are two given real numbers.

More precisely, we consider the following problem for the wave
equation with the homogeneous Dirichlet boundary conditions
\begin{equation} \label{eq:5.1}
\begin{cases}
y_{tt}-y_{xx} = 0,\\
y(0,x) = f(x),\\
y(T,x) = g(x),\\
y(t,0) = y(t,L) = 0,
\end{cases}
\end{equation}
and the problem for the wave equation with the homogeneous Neumann
boundary conditions
\begin{equation} \label{eq:5.2}
\begin{cases}
y_{tt}-y_{xx} = 0,\\
y(0,x) = f(x),\\
y(T,x) = g(x),\\
y_x(t,0) = y_x(t,L) = 0,
\end{cases}
\end{equation}
where $y=y(t,x)$ is the unknown function of
$(t,x)\in[0,T]\times[0,L]$, $f(x)$ and $g(x)$ are two given $C^{3}$
functions of $x\in [0,L]$. Moreover, $f$ and $g$ satisfy the
compatibility conditions
\begin{equation} \label{eq:5.3}
f(x)\Big|^{x=0}_{x=L}=g(x)\Big|^{x=0}_{x=L}=0,\quad
f''(x)\Big|^{x=0}_{x=L}=g''(x)\Big|^{x=0}_{x=L}=0, \quad\text{for
the Dirichlet conditions};
\end{equation}

\begin{equation} \label{eq:5.4}
f'(x)\Big|^{x=0}_{x=L}=g'(x)\Big|^{x=0}_{x=L}=0,\quad\text{for the
Neumann conditions}.
\end{equation}
We have the following theorem which can be viewed as consequences of
Theorem \ref{thm:1-2.1}.

\begin{Theorem}\label{thm:5.1}
(A) Suppose that the compatibility conditions in \eqref{eq:5.3} are
satisfied, $\displaystyle\frac{T}{L}$ is a rational number and
$\displaystyle\frac{T}{L}\not\in \mathds{N}$. Then the problem
\eqref{eq:5.1} admits a $C^2$ solution $y=y(t,x)$ on the domain
$[0,T]\times [0,L]$.

(B) Suppose that the compatibility conditions in \eqref{eq:5.4} are
satisfied, $\displaystyle\frac{T}{L}$ is a rational number and
$\displaystyle\frac{T}{L}\not\in \mathds{N}$. Then the problem
\eqref{eq:5.2} admits a $C^2$ solution $y=y(t,x)$ on the domain
$[0,T]\times [0,L]$.
\end{Theorem}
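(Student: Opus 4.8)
The plan is to reduce both problems to the periodic two-point boundary value problem \eqref{eq:1-2.5} already solved in Theorem \ref{thm:1-2.1}, by extending the data from $[0,L]$ to all of $\mathds{R}$ through reflection: an \emph{odd} $2L$-periodic reflection for the Dirichlet case (A) and an \emph{even} $2L$-periodic reflection for the Neumann case (B). In each case I would extend $f$ and $g$ to $2L$-periodic functions $\tilde f,\tilde g$ on $\mathds{R}$, apply (the argument behind) Theorem \ref{thm:1-2.1} with period $2L$ to obtain a $2L$-periodic $C^2$ solution $\tilde y(t,\theta)$ of $\tilde y_{tt}-\tilde y_{\theta\theta}=0$ with $\tilde y(0,\cdot)=\tilde f$ and $\tilde y(T,\cdot)=\tilde g$, and finally show that the built-in symmetry of $\tilde y$ forces the desired boundary condition, so that the restriction $y=\tilde y|_{[0,T]\times[0,L]}$ solves \eqref{eq:5.1} or \eqref{eq:5.2}. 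Note that with period $2L$ the hypothesis of Theorem \ref{thm:1-2.1} reads ``$T/(2L)$ rational and $2T/(2L)\not\in\mathds{N}$'', i.e.\ precisely ``$T/L$ rational and $T/L\not\in\mathds{N}$'', which is exactly the assumption of Theorem \ref{thm:5.1}.

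For part (A) I would take $\tilde f$ to be odd about the origin and $2L$-periodic (hence also odd about $x=L$), and similarly for $\tilde g$. The compatibility conditions \eqref{eq:5.3}, namely $f(0)=f(L)=0$ and $f''(0)=f''(L)=0$ (and the same for $g$), are exactly what is needed to make $\tilde f,\tilde g$ genuinely $C^3$ across the reflection points $0,L,2L,\dots$: the odd reflection makes the even-order derivatives $\tilde f,\tilde f''$ odd, so their continuity at $0$ and $L$ requires the vanishing of $f,f''$ there, while the odd-order derivatives $\tilde f',\tilde f'''$ are automatically matched since $f\in C^3$. Theorem \ref{thm:1-2.1} then yields a $2L$-periodic $C^2$ solution whose Fourier expansion, because $\tilde f,\tilde g$ are odd, contains only the sine modes $\sin(k\pi\theta/L)$; consequently $\tilde y(t,0)=\tilde y(t,L)=0$ for every $t$, which is the homogeneous Dirichlet condition, and restricting to $[0,L]$ gives the claimed solution of \eqref{eq:5.1}.

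For part (B) I would instead use the even $2L$-periodic extension. The key point, and the place where some care is needed, is that under \eqref{eq:5.4} (only $f'(0)=f'(L)=0$ and likewise for $g$) the even reflection is in general only $C^2$: its third derivative is odd and picks up a jump of size $2f'''(0)$ (resp.\ $2f'''(L)$) at the reflection points unless $f'''$ happens to vanish there. Thus I would not claim $\tilde f,\tilde g\in C^3$; instead I would observe that $\tilde f,\tilde g$ are $2L$-periodic $C^2$ functions whose third derivatives are piecewise smooth, which is exactly the hypothesis of Lemma \ref{lem:4.1}. Since the proof of Theorem \ref{thm:1-2.1} uses the $C^3$ assumption only through Lemma \ref{lem:4.1} and Remark \ref{rem:4.1}, the same construction applies verbatim and produces a $2L$-periodic $C^2$ solution whose Fourier expansion, $\tilde f,\tilde g$ being even, contains only the cosine modes $\cos(k\pi\theta/L)$. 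Differentiating term by term (legitimate because of the uniform convergence of $\sum_k|k\,a_k(t)|$ established in \eqref{eq:4.27}) gives $\tilde y_\theta(t,0)=\tilde y_\theta(t,L)=0$, the homogeneous Neumann condition, and restriction to $[0,L]$ furnishes the solution of \eqref{eq:5.2}.

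The main obstacle I anticipate is purely the regularity bookkeeping at the reflection points, rather than the dynamics. For (A) one must check that \eqref{eq:5.3} gives $C^3$ across $0$ and $L$ (and, through periodicity, that no further condition is hidden at $x=L$); for (B) one must resist the temptation to invoke Theorem \ref{thm:1-2.1} as a black box and instead verify explicitly that the merely-$C^2$ even extension still satisfies the hypotheses of Lemma \ref{lem:4.1}, so that the convergence estimates \eqref{eq:4.27}--\eqref{eq:4.28} underlying Theorem \ref{thm:1-2.1} remain valid and legitimize the term-by-term differentiation used to read off the Neumann condition. Once these regularity verifications and the translation of the non-resonance condition to period $2L$ are in place, the parity of the Fourier expansion makes the boundary conditions automatic and the theorem follows.
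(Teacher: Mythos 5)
Your proposal is correct and follows essentially the same route as the paper: odd (resp.\ even) $2L$-periodic extension of the data, application of Theorem \ref{thm:1-2.1} with period $2L$ (so its hypothesis becomes exactly ``$T/L$ rational, $T/L\not\in\mathds{N}$''), and restriction back to $[0,T]\times[0,L]$, with the compatibility conditions \eqref{eq:5.3}--\eqref{eq:5.4} ensuring the extensions are admissible. In fact you are more careful than the paper on two points it passes over silently --- making the boundary conditions follow explicitly from the parity of the Fourier modes rather than from an ``obviously the solution is odd/even'' claim, and noting that in case (B) the even extension is only $C^2$ with piecewise smooth third derivative, so one must invoke the actual hypothesis of Lemma \ref{lem:4.1} rather than Theorem \ref{thm:1-2.1} as a black box.
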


\begin{proof}
For the case of the Dirichlet boundary conditions, we extend any
$C^2$ solution $y=y(t,x)$ to the domain $[0,T]\times [-L,L]$ by
\begin{equation} \label{eq:5.5}
y(t,x)=-y(t,-x), \quad \text{for $x\in[-L,0]$},
\end{equation}
and then extend $y=y(t,x)$ to be $2L$-periodic. See \cite{kong3} and
\cite{kong4}. One easily checks that if the given initial/terminal
data have the form in \eqref{eq:5.1}, the extended initial/terminal
data are $2L$-periodic and given by (see \cite{kong3}-\cite{kong4})
\begin{equation} \label{eq:5.6}
\tilde{f}(x)\triangleq y(0,x)=
\begin{cases}
-f(-x),\quad \text{for $x\in[-L,0]$,}\\
f(x),\quad \text{for $x\in[0,L]$,}
\end{cases}
\end{equation}

\begin{equation} \label{eq:5.7}
\tilde{g}(x)\triangleq y(T,x)=
\begin{cases}
-g(-x),\quad \text{for $x\in[-L,0]$,}\\
g(x),\quad \text{for $x\in[0,L]$,}
\end{cases}
\end{equation}
respectively. Thus, we obtain an extended TBVP for the wave equation
defined on the strip $[0,T]\times \mathds{R}$. When the
compatibility conditions in \eqref{eq:5.3} are satisfied, this
extended $y=y(t,x)$ is a $C^2$ solution of the extended TBVP with
the extended initial/terminal data
$\left(\tilde{f}(x),\tilde{g}(x)\right)$. Therefore, we may make use
of Theorem \ref{thm:1-2.1} and obtain the solution, denoted by
$\tilde{y}=\tilde{y}(t,x)$, to the extended TBVP defined on the
strip $[0,T]\times \mathds{R}$. Obviously,
$\tilde{y}=\tilde{y}(t,x)$ is a $2L$-periodic odd $C^2$ function. So
the Dirichlet boundary conditions are satisfied naturally. Let
$y=y(t,x)$ be the restriction of $\tilde{y}=\tilde{y}(t,x)$ on the
region $[0,T]\times[0,L]$. It is easy to see that $y=y(t,x)$ is the
$C^2$ solution to \eqref{eq:5.1}.

Similarly, for the case of Neumann boundary conditions, we can
extend $y=y(t,x)$ by
\begin{equation} \label{eq:5.8}
y(t,x)=y(t,-x), \quad \text{for $x\in[-L,0]$}.
\end{equation}
See \cite{kong3}-\cite{kong4}. Then, $y=y(t,x)$ can be extended to
be a classical $2L$-periodic $C^2$ solution of the extended TBVP for
the wave equation with $2L$-periodic initial/terminal data given by
\begin{equation} \label{eq:5.9}
\tilde{f}(x)\triangleq y(0,x)=
\begin{cases}
f(-x),\quad \text{for $x\in[-L,0]$,}\\
f(x),\quad \text{for $x\in[0,L]$,}
\end{cases}
\end{equation}

\begin{equation} \label{eq:5.10}
\tilde{g}(x)\triangleq y(T,x)=
\begin{cases}
g(-x),\quad \text{for $x\in[-L,0]$,}\\
g(x),\quad \text{for $x\in[0,L]$,}
\end{cases}
\end{equation}
respectively. When the compatibility conditions in \eqref{eq:5.4}
are satisfied, by a similar argument as used above, we can prove the
part (B) in Theorem \ref{thm:5.1}.

Thus, the proof of Theorem \ref{thm:5.1} is completed.
\end{proof}

\begin{Remark} \label{rem:5.1}
Theorem \ref{thm:5.1} shows that the wave equation defined on the
domain $[0,T]\times [0,L]$ still possesses the exact controllability
in the cases of homogeneous Dirichlet boundary conditions and
homogeneous Neumann boundary conditions, provided that the
corresponding compatibility conditions are satisfied.\end{Remark}

\section{Wave equation with inhomogeneous
Dirichlet or Neumann boundary conditions} \label{sec:6}

This section is devoted to the exact controllability of the wave
equation defined on the domain $[0,T]\times[0,L]$ with inhomogeneous
Dirichlet boundary conditions and inhomogeneous Neumann boundary
conditions, respectively, where $T,\;L$ are two given positive real
numbers.

More precisely, we consider the following TBVP for the wave equation
with the inhomogeneous Dirichlet boundary conditions
\begin{equation} \label{eq:6.1}
\begin{cases}
y_{tt}-y_{xx} = 0,\\
y(0,x) = f(x),\\
y(T,x) = g(x),\\
y(t,0) = h(t),\\
y(t,L) = l(t),
\end{cases}
\end{equation}
and the TBVP for the wave equation with the inhomogeneous Neumann
boundary conditions
\begin{equation} \label{eq:6.2}
\begin{cases}
y_{tt}-y_{xx} = 0,\\
y(0,x) = f(x),\\
y(T,x) = g(x),\\
y_x(t,0) = H(t),\\
y_x(t,L) = K(t),
\end{cases}
\end{equation}
where $y=y(t,x)$ is the unknown function of $(t,x)\in
[0,T]\times[0,L]$, $f(x), g(x)$ are two given $C^{3}$ functions of
$x\in [0,L]$ which stands for the initial data and terminal data,
respectively, $h(t),\; l(t)$ are two given $C^{3}$ functions of
$t\in [0,T]$, and $H(t),\; K(t)$ are two given $C^{2}$ functions of
$t\in [0,T]$. Moreover, we assume that $f$, $g$, $h$, $l$, $H$ and
$K$ satisfy the following compatibility conditions
\begin{equation} \label{eq:6.3}
\begin{cases}
f(0) = h(0),\quad f''(0) = h''(0),\\
f(L) = l(0),\quad f''(L) = l''(0),\\
g(0) = h(T),\quad g''(0) = h''(T),\\
g(L) = l(T),\quad g''(L) = l''(T),
\end{cases}
\quad\text{for the Dirichlet conditions};
\end{equation}
\begin{equation} \label{eq:6.4}
\begin{cases}
f'(0) = H(0),\\
f'(L) = K(0),\\
g'(0) = H(T),\\
g'(L) = K(T),
\end{cases}
\quad\text{for the Neumann conditions}.
\end{equation}
We have
\begin{Theorem} \label{thm:6.1}
(A) Suppose that the compatibility conditions in \eqref{eq:6.3} are
satisfied, $\displaystyle\frac{T}{L}$ is a rational number and
$\displaystyle\frac{T}{L}\not\in\mathds{N}$. Then the problem
\eqref{eq:6.1} admits a $C^2$ solution $y=y(t,x)$ on the domain
$[0,T]\times[0,L]$.

(B) Suppose that the compatibility conditions in \eqref{eq:6.4} are
satisfied, $\displaystyle\frac{T}{L}$ is a rational number and $T<
L$. Then the problem \eqref{eq:6.2} admits a $C^2$ solution
$y=y(t,x)$ on the domain $[0,T]\times[0,L]$.
\end{Theorem}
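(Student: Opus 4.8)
The plan is to reduce each inhomogeneous boundary value problem to the homogeneous one already settled in Theorem \ref{thm:5.1}, by subtracting off a ``lift'' of the boundary data that is itself a solution of the wave equation. Concretely, for the Dirichlet problem \eqref{eq:6.1} I would first construct a function $w=w(t,x)\in C^{3}([0,T]\times[0,L])$ satisfying
\begin{equation*}
w_{tt}-w_{xx}=0,\qquad w(t,0)=h(t),\qquad w(t,L)=l(t),
\end{equation*}
and then set $\bar y=y-w$. Since $w$ solves the homogeneous wave equation, $\bar y$ again satisfies $\bar y_{tt}-\bar y_{xx}=0$; moreover $\bar y(t,0)=\bar y(t,L)=0$, so $\bar y$ obeys the homogeneous Dirichlet conditions, with new initial and terminal data $\bar f(x)=f(x)-w(0,x)$ and $\bar g(x)=g(x)-w(T,x)$, both of class $C^{3}$ because $f,g,h,l$ are.

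The reason for insisting that $w$ solve the wave equation, rather than taking the naive interpolation $(1-x/L)h(t)+(x/L)l(t)$, is that it makes the second-order compatibility conditions \eqref{eq:5.3} for $\bar f,\bar g$ fall out of \eqref{eq:6.3}. Indeed, from $w(t,0)=h(t)$ together with $w_{tt}=w_{xx}$ one gets $w_{xx}(t,0)=h''(t)$, hence $\bar f''(0)=f''(0)-w_{xx}(0,0)=f''(0)-h''(0)=0$ by \eqref{eq:6.3}; the remaining corner identities $\bar f''(L)=\bar g''(0)=\bar g''(L)=0$ and the zeroth-order conditions $\bar f(0)=\bar f(L)=\bar g(0)=\bar g(L)=0$ follow in exactly the same way. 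With \eqref{eq:5.3} verified, Theorem \ref{thm:5.1}(A) applies to $\bar y$ (here the hypotheses that $T/L$ is rational and $T/L\notin\mathds{N}$ are used), and $y=\bar y+w$ is the desired $C^{2}$ solution of \eqref{eq:6.1}.

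To construct $w$ I would pass to characteristic coordinates and write $w(t,x)=\Phi(x+t)+\Psi(x-t)$; the boundary data then become the pair of functional equations $\Phi(t)+\Psi(-t)=h(t)$ and $\Phi(L+t)+\Psi(L-t)=l(t)$ for $t\in[0,T]$, which determine $\Phi$ on $[0,L+T]$ and $\Psi$ on $[-T,L]$ up to free data on the interior subintervals. These are solved by prescribing the free data and propagating, the $C^{3}$ regularity of $h,l$ guaranteeing a $C^{3}$ lift once the derivatives up to third order are matched at the junctions. The Neumann problem \eqref{eq:6.2} is treated identically: one builds $w\in C^{3}$ with $w_{tt}-w_{xx}=0$, $w_{x}(t,0)=H(t)$ and $w_{x}(t,L)=K(t)$, checks that \eqref{eq:6.4} forces the now only first-order homogeneous Neumann compatibility for $\bar f,\bar g$ (e.g.\ $\bar f'(0)=f'(0)-w_{x}(0,0)=f'(0)-H(0)=0$), and then invokes Theorem \ref{thm:5.1}(B).

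I expect the construction of the lift $w$ to be the main obstacle, and it is there that the difference in hypotheses between the two parts originates. For the Dirichlet lift the reflection/propagation used to solve the functional equations closes up properly precisely under the rationality restriction inherited from Theorem \ref{thm:5.1}; for the Neumann lift the assumption $T<L$ (which in particular gives $0<T/L<1$, hence $T/L\notin\mathds{N}$, so that Theorem \ref{thm:5.1}(B) still applies) keeps the entire time interval within a single domain of dependence, so that $w$ can be written down with no reflection at all and its regularity is immediate. The routine remainder is then only the corner bookkeeping showing that \eqref{eq:6.3}, respectively \eqref{eq:6.4}, is exactly what is needed to transfer the compatibility conditions to $\bar f$ and $\bar g$.
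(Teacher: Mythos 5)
Your proposal is correct and is essentially the paper's own argument: the paper's substitutions $\tilde y = y - \frac{1}{2}\big(h(t+x)+h(t-x)\big)$ in \eqref{eq:6.6} and $\tilde y = y - \frac{1}{2}\int_{t-x}^{t+x}H(\xi)\,d\xi$ in \eqref{eq:6.10} are precisely your wave-equation lift $w=\Phi(x+t)+\Psi(x-t)$ with the symmetric choice $\Phi(\xi)=h(\xi)/2$, $\Psi(\eta)=h(-\eta)/2$, where $h$ (resp.\ $H$) is extended so that $h(t+L)+h(t-L)=2l(t)$ (resp.\ $H(t+L)+H(t-L)=2K(t)$) --- exactly your two functional equations --- and the reduction to Theorem \ref{thm:5.1} via the compatibility transfer \eqref{eq:6.3}$\Rightarrow$\eqref{eq:6.8} and \eqref{eq:6.4}$\Rightarrow$\eqref{eq:6.12} is identical. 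The one inaccuracy is your closing remark: rationality of $T/L$ plays no role in constructing the lift (the paper avoids your $T>L$ reflection/propagation in case (A) altogether by exchanging the $t$- and $x$-axes, permissible since $T/L\notin\mathds{N}$); rationality enters only when Theorem \ref{thm:5.1} is invoked for the homogenized problem.
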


\begin{proof}
For the problem \eqref{eq:6.1}, noting that
$\displaystyle\frac{T}{L}\not\in\mathds{N}$, without loss of
generality, we may assume that $T<L$ (otherwise, we only need
exchange the $t$-axes and the $x$-axes and then reduce the problem
to the case of $T<L$). In this situation, we can extend $h(t)$ to
the interval $[-T-L,T+L]$ by
\begin{equation} \label{eq:6.5}
h(t+L)+h(t-L)=2l(t), \quad \text{for $t\in[0,T]$}.
\end{equation}
Moreover we require that the extended $h(t)$ is a $C^3$ function on
the interval $[-T-L,T+L]$.

Introduce
\begin{equation} \label{eq:6.6}
\tilde{y}(t,x)=y(t,x)-\displaystyle\frac{h(t+x)+h(t-x)}{2}.
\end{equation}
Then by \eqref{eq:6.5}, the problem \eqref{eq:6.1} becomes
\begin{equation} \label{eq:6.7}
\begin{cases}
\tilde{y}_{tt}-\tilde{y}_{xx} = 0,\\
\tilde{f}(x) \triangleq\tilde{y}(0,x) = f(x)-\displaystyle\frac{h(x)+h(-x)}{2},\\
\tilde{g}(x) \triangleq\tilde{y}(T,x) = g(x)-\displaystyle\frac{h(T+x)+h(T-x)}{2},\\
\tilde{h}(t) \triangleq\tilde{y}(t,0) = 0,\\
\tilde{l}(t) \triangleq\tilde{y}(t,L) = 0.
\end{cases}
\end{equation}
And by \eqref{eq:6.3}, \eqref{eq:6.5}, the boundary data
$\tilde{f}$, $\tilde{g}$, $\tilde{h}$ and $\tilde{l}$ satisfy the
compatibility conditions
\begin{equation} \label{eq:6.8}
\begin{cases}
\tilde{f}(0) = \tilde{f}(L) = 0,\quad \tilde{f}''(0) = \tilde{f}''(L) = 0,\\
\tilde{g}(0) = \tilde{g}(L) = 0,\quad \tilde{g}''(0) =
\tilde{g}''(L) = 0.
\end{cases}
\end{equation}
Therefore, we can make use of Theorem \ref{thm:5.1} (A) and obtain
the $C^2$ solution to the problem \eqref{eq:6.7}, and then the $C^2$
solution to \eqref{eq:6.1}. This proves the part (A) in Theorem
\ref{thm:6.1}.

Similarly, for the problem \eqref{eq:6.2}, noting $T<L$, we can
extend $H(t)$ to the interval $[-L,T+L]$ by
\begin{equation} \label{eq:6.9}
H(t+L)+H(t-L)=2K(t), \quad \text{for $t\in[0,T]$}.
\end{equation}
As before, we require that the extended $H(t)$ is a $C^2$ function
on $[-L,T+L]$.

Let
\begin{equation} \label{eq:6.10}
\tilde{y}(t,x)=y(t,x)-\displaystyle\frac{1}{2}\int_{t-x}^{t+x}{H(\xi)}d\xi.
\end{equation}
Then by \eqref{eq:6.9}, the problem \eqref{eq:6.2} becomes
\begin{equation} \label{eq:6.11}
\begin{cases}
\tilde{y}_{tt}-\tilde{y}_{xx} = 0,\\
\tilde{f}(x) \triangleq\tilde{y}(0,x) = f(x)-\displaystyle\frac{1}{2}\int_{-x}^{x}{H(\xi)}d\xi,\\
\tilde{g}(x) \triangleq\tilde{y}(T,x) = g(x)-\displaystyle\frac{1}{2}\int_{T-x}^{T+x}{H(\xi)}d\xi,\\
\tilde{H}(t) \triangleq\tilde{y}_x(t,0) = 0,\\
\tilde{K}(t) \triangleq\tilde{y}_x(t,L) = 0.
\end{cases}
\end{equation}
And by \eqref{eq:6.4}, \eqref{eq:6.9}, the boundary data
$\tilde{f}$, $\tilde{g}$, $\tilde{H}$ and $\tilde{K}$ satisfy the
compatibility conditions
\begin{equation} \label{eq:6.12}
\begin{cases}
\tilde{f}'(0) = \tilde{f}'(L) = 0,\\
\tilde{g}'(0) = \tilde{g}'(L) = 0.
\end{cases}
\end{equation}
Therefore, we can make use of Theorem \ref{thm:5.1} (B) and obtain
the $C^2$ solution to the problem \eqref{eq:6.11}, and then the
$C^2$ solution to \eqref{eq:6.2}. This proves the part (B) in
Theorem \ref{thm:6.1}.

Thus, the proof of Theorem \ref{thm:6.1} is completed.
\end{proof}

\begin{Remark} \label{rem:6.1}
Theorem \ref{thm:6.1} shows that the wave equation defined on the
domain $[0,T]\times [0,L]$ still possesses the exact controllability
even in the cases of inhomogeneous Dirichlet boundary conditions and
inhomogeneous Neumann boundary conditions, provided that the
corresponding compatibility conditions are satisfied.
\end{Remark}

\section{Some nonlinear wave equations}\label{sec:7}

This section is devoted to the global exact controllability for some
nonlinear wave equations including a wave map equation arising from
geometry and the equations for the motion of relativistic strings in
the Minkowski space-time $\mathds{R}^{1+n}$.

\subsection{A wave map equation}

The theory of wave maps plays an important role in both mathematics
and theoretical physics. The wave map equation is highly
geometrical, and can be rewritten in many different ways. It is also
related to the Einstein equations in general relativity. In this
subsection, we consider the following TBVP for a kind of wave map
equation\footnote{In fact, the equation in \eqref{eq:7.1} is nothing
but the wave map from the Minkowski space $\mathds{R}^{1+1}$ to the
Riemmannian manifold $(\mathds{R},g)$, where $ds^2=gdy^2\triangleq
e^{-2y}dy^2$.}
\begin{equation} \label{eq:7.1}
\begin{cases}
y_{tt} - y_{xx} = y_t^2-y_x^2,\\
y(0,x) = f(x),\\
y(T,x) = g(x),
\end{cases}
\end{equation}
where $T$ is a given positive real number, and $f(x),g(x)$ are two
given $C^2$-smooth functions which stand for the initial and
terminal states, respectively.

By making the following transformation on the unknown
\begin{equation} \label{eq:7.2}
z(t,x) = e^{-y(t,x)},
\end{equation}
the TBVP \eqref{eq:7.1} can be rewritten as
\begin{equation} \label{eq:7.3}
\begin{cases}
z_{tt} - z_{xx} = 0,\\
z(0,x) = e^{-f(x)}\triangleq \hat{f}(x),\\
z(T,x) = e^{-g(x)}\triangleq \hat{g}(x),
\end{cases}
\end{equation}
Obviously, the TBVP \eqref{eq:7.1} has a $C^2$-smooth solution on
the strip $[0,T]\times\mathds{R}$ if and only if the TBVP
\eqref{eq:7.3} has a $C^2$-smooth positive solution on
$[0,T]\times\mathds{R}$. Therefore, in order to prove the existence
of a $C^2$-smooth solution of the TBVP \eqref{eq:7.1} on
$[0,T]\times\mathds{R}$, it suffices to show the existence of a
$C^2$-smooth positive solution of the TBVP \eqref{eq:7.3} on this
domain. In fact, if so, $y(t,x) = -\ln{z(t,x)}$ is a $C^2$-smooth
solution to the TBVP \eqref{eq:7.1}.

To do so, we suppose that
\begin{equation} \label{eq:7.4}
\inf{f(x)} > \sup{g(x)}.
\end{equation}
Similar to \eqref{eq:2.2} and \eqref{eq:2.3}, we introduce
\begin{equation} \label{eq:7.5}
\tilde{z}(t,x) = z(t,x) -
\displaystyle\frac{\hat{f}(x-t)+\hat{f}(x+t)}{2}
\end{equation}
and
\begin{align} \label{eq:7.6}
\tilde{f}(x) &= \hat{g}(x) -
\displaystyle\frac{\hat{f}(x-T)+\hat{f}(x+T)}{2} \nonumber
\\
&= e^{-g(x)} - \frac{1}{2}\left[e^{-f(x-T)}+e^{-f(x+T)}\right] > 0.
\end{align}
In \eqref{eq:7.6}, we have made use of the assumption
\eqref{eq:7.4}. Thus, the TBVP \eqref{eq:7.3} can be rewritten as
\begin{equation} \label{eq:7.7}
\begin{cases}
\tilde{z}_{tt} - \tilde{z}_{xx} = 0,\\
\tilde{z}(0,x) = 0,\\
\tilde{z}(T,x) = \tilde{f}(x).
\end{cases}
\end{equation}

As shown in Section \ref{sec:2}, we may construct a non-negative
$C^1$ function $u(x)$ defined on the interval $[-T, T]$ which
satisfies the condition \eqref{eq:2.8}, and define an initial
velocity $v(x)$ as shown in \eqref{eq:2.9}. By d'Alembert formula,
the solution of the Cauchy problem for the wave equation in
\eqref{eq:7.7} with the initial data
\begin{equation} \label{eq:7.8}
\tilde{z}(0,x) = 0,\quad \tilde{z}_t(0,x) = v(x)
\end{equation}
reads
\begin{equation} \label{eq:7.9}
\tilde{z}(t,x) =
\displaystyle\frac{1}{2}\int_{x-t}^{x+t}{v(\tau)}d\tau.
\end{equation}
It follows from \eqref{eq:7.9}, \eqref{eq:7.5} and the fact that
$\hat{f}(x)>0$ that the TBVP \eqref{eq:7.3} has a $C^2$-smooth
positive solution $z=z(t,x)$, provided that $v(x)\geq 0$.

By the above argument, the key point to show the existence of a
$C^2$-smooth positive solution of the TBVP \eqref{eq:7.3} on
$[0,T]\times\mathds{R}$ is to construct a non-negative initial
velocity $v(x)$. By \eqref{eq:2.9}, we have
\begin{Lemma} \label{lem:7.1}
Suppose that the function $\tilde{f}(x)$ defined by \eqref{eq:7.6}
satisfies
\begin{equation} \label{eq:7.10}
\begin{cases}
\displaystyle\sum_{i=1}^N\tilde{f}'\big(x-(2i-1)T\big)\geq 0, \quad \forall \;x> T,\\
\displaystyle\sum_{i=1}^N\tilde{f}'\big(x+(2i-1)T\big)\leq 0, \quad
\forall \;x<-T.
\end{cases}
\end{equation}
Then the function $v(x)$ defined by \eqref{eq:2.9} is non-negative.
\end{Lemma}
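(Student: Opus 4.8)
The plan is to verify the non-negativity of $v(x)$ directly from its explicit formula \eqref{eq:2.9}, splitting the analysis into the two regimes $x\geq 0$ and $x<0$ that already appear in the definition of $v$. The two ingredients I would lean on are: (i) the function $u$ has been constructed to be non-negative on $[-T,T]$, which is exactly the freedom exploited in the sentence preceding the Lemma; and (ii) the hypotheses \eqref{eq:7.10} control precisely the sign of the two summation terms in \eqref{eq:2.9}. Thus the whole argument reduces to checking that each summand in each branch of \eqref{eq:2.9} is non-negative.

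The first, and really the only, computation I would carry out is a bound on the argument of $u$. Recall $N=\left[\frac{|x|+T}{2T}\right]$, so that $N\leq\frac{|x|+T}{2T}<N+1$, which rearranges to $2NT\leq|x|+T<2NT+2T$. For $x\geq 0$ this gives $x-2NT\in[-T,T)$, and for $x<0$ it gives $x+2NT\in(-T,T]$. Hence in both branches the argument of $u$ lies in $[-T,T]$, so by the non-negativity of $u$ we obtain $u(x-2NT)\geq 0$ (resp. $u(x+2NT)\geq 0$).

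With this in hand the sign chase is immediate. For $0\leq x<T$ (resp. $-T<x<0$) one has $N=0$, the summation in \eqref{eq:2.9} is empty, and $v(x)=u(x)\geq 0$. For $x>T$ one has $N\geq 1$, and the first inequality in \eqref{eq:7.10} yields $\sum_{i=1}^N\tilde f'\big(x-(2i-1)T\big)\geq 0$, so that $v(x)=u(x-2NT)+2\sum_{i=1}^N\tilde f'\big(x-(2i-1)T\big)\geq 0$; symmetrically, for $x<-T$ one has $N\geq 1$ and the second inequality in \eqref{eq:7.10} gives $\sum_{i=1}^N\tilde f'\big(x+(2i-1)T\big)\leq 0$, whence the subtracted term $-2\sum_{i=1}^N\tilde f'\big(x+(2i-1)T\big)\geq 0$ and again $v(x)\geq 0$.

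The statement does not demand much that is hard; the only points needing care are the two transition values $x=\pm T$, where $N$ jumps from $0$ to $1$ and where \eqref{eq:7.10} is posed as a strict (open) condition. There I would not re-evaluate the formula but simply invoke the continuity of $v$ already established in Section \ref{sec:2} (together with the relation $u(T)-u(-T)=2\tilde f'(0)$ in \eqref{eq:2.8}), which forces $v(\pm T)\geq 0$ as a one-sided limit of the $N=0$ branch. The one genuinely non-routine prerequisite is the existence of a \emph{non-negative} $u$ meeting all three constraints in \eqref{eq:2.8}; this is available because the assumption \eqref{eq:7.4} makes $\tilde f(x)>0$, in particular $\tilde f(0)>0$, so the prescribed integral $\int_{-T}^T u\,d\tau=2\tilde f(0)$ is positive and compatible with $u\geq 0$, leaving enough freedom (for instance by adding a sufficiently tall non-negative bump) to meet the two remaining endpoint and derivative conditions. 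Hence the main obstacle is conceptual bookkeeping rather than any substantive estimate.
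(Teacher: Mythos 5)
Your proof is correct; the paper itself offers no argument at all (it states only that ``Lemma \ref{lem:7.1} is obvious, here we omit its proof''), and your direct sign-check is exactly the verification the authors leave implicit. In particular, the bound $x-2NT\in[-T,T)$ for $x\geq 0$ (resp.\ $x+2NT\in(-T,T]$ for $x<0$), the use of the non-negative function $u$ constructed just before the lemma, the hypotheses \eqref{eq:7.10} for the summation terms, and the continuity of $v$ (from Section \ref{sec:2}, via the matching condition $u(T)-u(-T)=2\tilde f'(0)$ in \eqref{eq:2.8}) at the transition points $x=\pm T$ together close every case, so your write-up supplies precisely the missing details.
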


Lemma \ref{lem:7.1} is obvious, here we omit its proof.

Summarizing the above argument yields the following theorem.

\begin{Theorem} \label{thm:7.1}
Suppose that $f(x)$ and $g(x)$ are two $C^2$-smooth functions and
satisfy \eqref{eq:7.4} and \eqref{eq:7.10}. Then the TBVP
\eqref{eq:7.1} admits a $C^2$-smooth solution $y = y(t,x)$ defined
on the strip $[0,T]\times \mathds{R}$.
\end{Theorem}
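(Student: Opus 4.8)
The plan is to leverage the full reduction already assembled above and verify only that the positivity requirement can be met simultaneously with the terminal condition. First I would invoke the substitution $z = e^{-y}$ from \eqref{eq:7.2}, which converts the nonlinear TBVP \eqref{eq:7.1} into the linear TBVP \eqref{eq:7.3}. The decisive observation, already noted in the text, is that $y = -\ln z$ is a well-defined $C^2$ function solving \eqref{eq:7.1} if and only if $z$ is a $C^2$ solution of \eqref{eq:7.3} that remains strictly positive on $[0,T]\times\mathds{R}$. Thus the whole problem is reduced to producing a \emph{positive} solution of the linear equation.

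Next I would carry out the normalization of Section \ref{sec:2}: setting $\tilde{z}$ and $\tilde{f}$ as in \eqref{eq:7.5}--\eqref{eq:7.6}, the problem \eqref{eq:7.3} becomes the zero-initial-data TBVP \eqref{eq:7.7}. Here the hypothesis \eqref{eq:7.4}, namely $\inf f > \sup g$, is used precisely to guarantee $\tilde{f}(x) > 0$ for all $x$, as recorded in \eqref{eq:7.6}. I would then reuse the construction of Section \ref{sec:2} essentially verbatim: choose a $C^1$ function $u$ on $[-T,T]$ satisfying \eqref{eq:2.8}, define the initial velocity $v$ by \eqref{eq:2.9}, and form the d'Alembert solution \eqref{eq:7.9}. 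By the argument behind Theorem \ref{thm:2.1}, this $\tilde{z}$ is $C^2$ and satisfies $\tilde{z}(T,x) = \tilde{f}(x)$, so $z = \tilde{z} + \frac{\hat{f}(x-t)+\hat{f}(x+t)}{2}$ is a $C^2$ solution of \eqref{eq:7.3}.

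The heart of the argument, and the only place where a genuinely new ingredient is needed, is the positivity of $z$. I would write $z$ as the sum $\tilde{z}(t,x) + \frac{\hat{f}(x-t)+\hat{f}(x+t)}{2}$. Since $\hat{f} = e^{-f} > 0$ everywhere, the second term is strictly positive; hence it suffices that $\tilde{z}(t,x) = \frac{1}{2}\int_{x-t}^{x+t} v(\tau)\,d\tau$ be non-negative, and for that it is enough that $v \geq 0$. This is exactly where Lemma \ref{lem:7.1} enters: under the sign conditions \eqref{eq:7.10} on the partial sums of $\tilde{f}'$, the explicit formula \eqref{eq:2.9} for $v$ yields $v(x) \geq 0$ for all $x$. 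Combining these facts gives $z > 0$, so $y = -\ln z \in C^2([0,T]\times\mathds{R})$ solves \eqref{eq:7.1}, as desired.

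The main obstacle I anticipate is reconciling the two competing demands on $v$: the Section \ref{sec:2} construction forces $v$ to have a rigid piecewise structure, built from $u$ and shifted copies of $\tilde{f}'$, in order to hit the prescribed terminal data, yet we simultaneously require $v \geq 0$. The conditions \eqref{eq:7.4} and \eqref{eq:7.10} are tailored to make both hold at once: \eqref{eq:7.4} controls the baseline term through $\tilde{f} > 0$ (in particular $\tilde{f}(0) > 0$, which makes the integral constraint in \eqref{eq:2.8} compatible with a \emph{non-negative} choice of $u$ as in Remark \ref{rem:2.1}), while \eqref{eq:7.10} controls the signs of the summation terms appearing in \eqref{eq:2.9}. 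Verifying that a non-negative $u$ satisfying all three constraints of \eqref{eq:2.8} genuinely exists under $\tilde{f} > 0$ is the one point demanding care.
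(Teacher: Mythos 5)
Your proposal is correct and follows essentially the same route as the paper: the substitution $z=e^{-y}$, the normalization \eqref{eq:7.5}--\eqref{eq:7.6} using \eqref{eq:7.4} to get $\tilde{f}>0$, the Section~\ref{sec:2} construction of $v$ via a non-negative $u$ satisfying \eqref{eq:2.8}, and Lemma~\ref{lem:7.1} to conclude $v\geq 0$ and hence $z>0$. Your closing remark correctly identifies the one point the paper itself glosses over (the existence of a \emph{non-negative} $u$ satisfying \eqref{eq:2.8}, which holds since $\tilde{f}(0)>0$ allows a non-negative $C^1$ profile meeting the integral and endpoint-derivative constraints), so no gap beyond what is already present in the paper.
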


In order to understand the condition \eqref{eq:7.10} clearly, we
present the following two examples.

\begin{Example} Choose the functions $f(x)$ and $g(x)$ such that the
function $\tilde{f}(x)$ defined by \eqref{eq:7.6} satisfies
\begin{equation} \label{eq:7.11}
\begin{cases}
\tilde{f}'(x)\geq 0, \quad \forall \;x> 0,\\
\tilde{f}'(x)\leq 0, \;\quad \forall \;x< 0.
\end{cases}
\end{equation}
In this case, it is easy to see that $\tilde{f}(x)$ satisfies the
assumption \eqref{eq:7.10}, and then the TBVP \eqref{eq:7.1} has a
$C^2$-smooth solution on the domain $[0,T]\times \mathds{R}$. For
example, we may choose $f(x)$ and $g(x)$ satisfying
$$\tilde{f}(x) = x^{2n} + c,$$ where
$\tilde{f}(x)$ is defined by \eqref{eq:7.6}, $c$ is a constant and
$n$ is a positive integer.
\end{Example}

\begin{Example} Choose the functions $f(x)$ and $g(x)$ such that the
function $\tilde{f}(x)$ defined by \eqref{eq:7.6} satisfies
\begin{equation} \label{eq:7.12}
\begin{cases}
\tilde{f}'(x)\geq 0, \quad \forall \; x\in [0, 2T], \\
\tilde{f}'(x) = -\tilde{f}'(x+2T).
\end{cases}
\end{equation}
In the present situation, we have
\begin{equation} \label{eq:7.13}
\displaystyle\sum_{i=1}^N\tilde{f}'\big(x-(2i-1)T\big)
\begin{cases}
\geq 0, \; if \; N \; is \; odd, \\
=0, \; if \; N  \; is  \; even,
\end{cases}\quad if\;\; x\geq 0
\end{equation}
and
\begin{equation} \label{eq:7.14}
\displaystyle\sum_{i=1}^N\tilde{f}'\big(x+(2i-1)T\big)
\begin{cases}
\leq 0, \; if \; N \; is \; odd, \\
=0, \; if \; N  \; is  \; even,
\end{cases}  if \;\; x < 0.
\end{equation}
This implies that $\tilde{f}(x)$ satisfies the assumption
\eqref{eq:7.10}, and then the TBVP \eqref{eq:7.1} has a $C^2$-smooth
solution on the domain $[0,T]\times \mathds{R}$. As an example, we
may choose $f(x)$ and $g(x)$ satisfying
$$\tilde{f}'(x) =
\displaystyle\frac{\pi}{2T}\sin{\frac{\pi x}{2T}},\quad
\text{i.e.,}\quad \tilde{f}(x) = -\displaystyle\cos{\frac{\pi
x}{2T}} + c,$$ where $\tilde{f}(x)$ is defined by \eqref{eq:7.6} and
$c$ is a constant.
\end{Example}

\subsection{The equations for the motion of relativistic strings in
the Minkowski space-time $\mathds{R}^{1+n}$}

Let $X=(t,x_1,\cdots,x_n)$ be a position vector of a point in the
$(1+n)$-dimensional Minkowski space $\mathbb{R}^{1+n}$. Consider the
motion of a relativistic string and let $x_i=x_i(t,\theta)\;
(i=1,\cdots,n)$ be the local equation of its world surface, where
$(t,\theta)$ are the the surface parameters. The equations governing
the motion of the string read (see \cite{kong4})
\begin{equation}\label{eq:7.2.1}
|x_{\theta}|^2x_{tt}-2\langle x_t,x_{\theta}\rangle
x_{t\theta}+(|x_t|^2-1)x_{\theta\theta}=0.
\end{equation}
The system \eqref{eq:7.2.1} contains $n$ nonlinear partial
differential equations of second order. These equations also
describe the extremal surfaces in the Minkowski space
$\mathbb{R}^{1+n}$. Kong et al \cite{kong4} considered the Cauchy
problem for the equations \eqref{eq:7.2.1} with the following
initial data
\begin{equation}\label{eq:7.2.2}
t=0:\;\;x=p(\theta),\quad x_t=q(\theta),
\end{equation}
where $p$ is a given $C^2$ vector-valued function and $q$ is a given
$C^1$ vector-valued function. The Cauchy problem
\eqref{eq:7.2.1}-\eqref{eq:7.2.2} describes the motion of a free
relativistic string in the Minkowski space $\mathbb{R}^{1+n}$ with
the initial position $p(\theta)$ and initial velocity (in the
classical sense) $q(\theta)$. In particular, when $p(\theta)$ and
$q(\theta)$ are periodic, the string under consideration is a closed
one. It has shown that the global smooth solution of the Cauchy
problem \eqref{eq:7.2.1}-\eqref{eq:7.2.2} exists and is unique (see
\cite{kong4}). On the other hand, it is well known that closed form
representations of solutions for partial differential equations are
very important and fundamental in both mathematical analysis and
physical understanding. Unfortunately, nonlinear partial
differential equations in general do not possess representations of
solutions in closed form. Surprisingly, in the paper \cite{kong3},
we discovered a general solution formula in closed form for the
nonlinear wave equations \eqref{eq:7.2.1}. By introducing a new
concept of {\it generalized time-periodic function}, we proved that,
if the initial data is periodic, then the smooth solution of the
Cauchy problem \eqref{eq:7.2.1}-\eqref{eq:7.2.2} is {\it generalized
time-periodic}, namely, the space-periodicity also implies the
time-periodicity. This fact yields an interesting physical
phenomenon: {\it the motion of closed strings is always generalized
time-periodic}.

However, up to now there is not any result on the TBVP for the
equations \eqref{eq:7.2.1}. In this subsection we investigate this
problem and prove the global exact controllability of the equations
\eqref{eq:7.2.1}.

By \cite{kong0}, under a very natural assumption which is a
necessary and sufficient condition guaranteeing the motion is
physical, there exists a globally diffeomorphic transformation of
variables
\begin{equation}\label{eq:7.2.3}
\tau = t,\quad \vartheta = \vartheta(t,\theta)
\end{equation}
such that the system \eqref{eq:7.2.1} become
\begin{equation}\label{eq:7.2.4}
\tilde{x}_{\tau\tau}-\tilde{x}_{\vartheta\vartheta}=0,
\end{equation}
where
\begin{equation}\label{eq:7.2.5}
\tilde{x}(\tau,\vartheta)=x\big(t(\tau,\vartheta),\theta(\tau,\vartheta)\big),
\end{equation}
in which $t=t(\tau,\vartheta),\;\theta=\theta(\tau,\vartheta)$ is
the inverse of the transformation \eqref{eq:7.2.3}.

By Theorem \ref{thm:2.1}, the system \eqref{eq:7.2.4} possesses the
global exact controllability. Noting the transformation
\eqref{eq:7.2.3} is globally diffeomorphic, we obtain the global
exact controllability of the system \eqref{eq:7.2.1}.

In physics, the global exact controllability of the system
\eqref{eq:7.2.1} implies some interesting physical phenomena. For
example, if we take the periodic initial data $p(\theta)$ which
stands for a closed string, and a non-periodic terminal condition
(e.g., $x(T,\theta)=\theta$) which denotes an open string, then the
global exact controllability of the system \eqref{eq:7.2.1} means
that a closed string may become an open string. This fact implies
that the topological structure of the string may change in its
motion process.

\section{Hyperbolic curvature flow and Gage-Hamilton's theorem} \label{sec:8}
Let $\gamma(t)$ be a one parameter family of closed convex smooth
curves in the plane. The position vector $X(t,s)$ parameterizes the
curve and the curvature is $k(t,s)$. The hyperbolic curvature flow
is described by the following wave equation
\begin{equation} \label{eq:8.1}
k_{tt} - k_{ss} = 0,
\end{equation}
where the subscript $\nu$ denotes partial differentiation with
respect $\nu$.

Consider the TBVP for the wave equation \eqref{eq:8.1} with the
following initial condition and terminal condition
\begin{equation} \label{eq:8.2}
k(0,s) = f(s) , \quad k(T,s)=k_0 >0,
\end{equation}
where $f(s)$ is a given non-negative periodic smooth function with
the period $L$, $T$ is a positive real number, and $k_0$ is a
positive constant. We have
\begin{Theorem} \label{thm:8.1}
There exists a positive constant $k_0$ such that the TBVP
\eqref{eq:8.1}-\eqref{eq:8.2} admits a non-negative smooth solution
$k=k(t,s)$, provided that $\displaystyle\frac{T}{L}$ is a rational
number with $\displaystyle\frac{2T}{L}\not\in \mathds{N}$.
\end{Theorem}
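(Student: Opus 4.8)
The plan is to reduce Theorem \ref{thm:8.1} to the already-proved Theorem \ref{thm:1-2.1} by choosing the terminal constant $k_0$ appropriately and then verifying that the resulting solution stays non-negative. Since the terminal state $g(s)\equiv k_0$ is a constant (hence $L$-periodic), and the initial state $f(s)$ is $L$-periodic and smooth, the TBVP \eqref{eq:8.1}-\eqref{eq:8.2} is precisely an instance of the periodic TBVP \eqref{eq:1-2.5} with $g\equiv k_0$. Under the hypothesis that $T/L$ is rational with $2T/L\notin\mathds{N}$, Theorem \ref{thm:1-2.1} immediately furnishes a global $L$-periodic $C^2$ solution $k=k(t,s)$. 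The genuine content of the theorem is therefore not existence but the \emph{non-negativity} of the solution, together with the freedom to select $k_0$.

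First I would write the solution via its Fourier representation as in the proof of Theorem \ref{thm:1-2.1}. The Fourier coefficients $\alpha_k,\bar\alpha_k$ of $k(t,s)$ at each time $t$ are determined by $f$ alone through \eqref{eq:4.19}-\eqref{eq:4.20}, while the terminal data $g\equiv k_0$ enters only through the combination $\tfrac{2}{L}\int_0^L k_0\cos(\tfrac{2k\pi}{L}x)\,dx$, which vanishes for $k\geq 1$ and contributes only to the zeroth mode. Concretely, the constant mode is $\tfrac12 a_0(t)=\tfrac12(\alpha_0+\beta_0 t)$ with $\alpha_0=\tfrac{2}{L}\int_0^L f\,dx$ and $\beta_0=\tfrac{2}{LT}\int_0^L(k_0-f)\,dx$, so that $\tfrac12 a_0(t)$ interpolates linearly between the average of $f$ and $k_0$. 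The key observation is that the oscillatory part $\sum_{k\geq1}\bigl(a_k(t)\cos(\tfrac{2k\pi}{L}s)+b_k(t)\sin(\tfrac{2k\pi}{L}s)\bigr)$ is bounded, independently of $k_0$, by a constant $M$ obtained from the convergent series estimates \eqref{eq:4.23}-\eqref{eq:4.24}; here the denominators $\sin(\tfrac{2k\pi}{L}T)$ are bounded below by $C_s$ from \eqref{eq:4.22}, exactly as in Theorem \ref{thm:1-2.1}, which is why the rationality hypothesis is needed.

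Given this uniform bound $M$ on the oscillatory part, the plan is to choose $k_0$ large enough that the constant mode dominates it for all $t\in[0,T]$. Since $\tfrac12 a_0(t)$ ranges over the segment between $\tfrac12\alpha_0$ and $\tfrac12(\alpha_0+\beta_0 T)=\tfrac1L\int_0^L k_0\,dx$, and $f\geq 0$ makes $\alpha_0\geq 0$, one checks that $\min_{t\in[0,T]}\tfrac12 a_0(t)\to+\infty$ as $k_0\to+\infty$. Therefore it suffices to take $k_0$ so large that $\min_{t}\tfrac12 a_0(t)>M$, which yields $k(t,s)\geq \tfrac12 a_0(t)-M>0$ on the whole cylinder $[0,T]\times\mathds{R}$. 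The main obstacle — and the point requiring the most care — is establishing that $M$ is genuinely independent of $k_0$; this rests on the fact that raising the terminal value by a constant shifts only the zeroth Fourier mode and leaves every $\alpha_k,\beta_k,\bar\alpha_k,\bar\beta_k$ with $k\geq1$ unchanged, so the bounds \eqref{eq:4.23}-\eqref{eq:4.28} controlling the oscillatory modes and their derivatives are unaffected. Once this decoupling is made explicit, the smoothness of $k$ follows from Theorem \ref{thm:1-2.1} (upgrading to smoothness where $f$ is smooth), and non-negativity follows from the dominance argument, completing the proof.
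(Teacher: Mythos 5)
Your reduction to Theorem \ref{thm:1-2.1} and your decoupling observation (a constant terminal state feeds only the zeroth Fourier mode, so the coefficients $\alpha_k,\beta_k,\bar\alpha_k,\bar\beta_k$ with $k\ge 1$, and hence the bound $M$ on the oscillatory part, are independent of $k_0$) are both correct. The gap is the final dominance step: the claim that $\min_{t\in[0,T]}\tfrac12 a_0(t)\to+\infty$ as $k_0\to+\infty$ is false. The zeroth mode $\tfrac12 a_0(t)$ is affine in $t$ with the fixed endpoint value $\tfrac12 a_0(0)=\tfrac1L\int_0^L f\,dx$, which does not depend on $k_0$ at all; hence $\min_{t\in[0,T]}\tfrac12 a_0(t)=\min\bigl\{\tfrac1L\int_0^L f\,dx,\;k_0\bigr\}\le \tfrac1L\int_0^L f\,dx$ for every $k_0$. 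So you can never arrange $\min_t \tfrac12 a_0(t)>M$ unless $\tfrac1L\int_0^L f\,dx>M$ already holds, and this fails whenever $f$ vanishes somewhere: at $t=0$ the oscillatory part equals $f(s)-\tfrac1L\int_0^L f\,dx$, so $M\ge \tfrac1L\int_0^L f\,dx-\min f=\tfrac1L\int_0^L f\,dx$. In other words, your lower bound $k\ge \tfrac12 a_0(t)-M$ is vacuous precisely for small $t>0$, which is where non-negativity is genuinely in doubt (there $k\approx f+tv$, and the danger is a negative initial velocity at a zero of $f$).

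The argument is repaired by applying your decoupling to the initial velocity rather than to the solution. One has $v(s)=\partial_t k(0,s)=\tfrac12\beta_0+\sum_{k\ge1}\tfrac{2k\pi}{L}\bigl(\beta_k\cos(\tfrac{2k\pi}{L}s)+\bar\beta_k\sin(\tfrac{2k\pi}{L}s)\bigr)$, whose oscillatory part is bounded by some $M_v$ independent of $k_0$ (by \eqref{eq:4.25} and your decoupling), while $\tfrac12\beta_0=\tfrac1T\bigl(k_0-\tfrac1L\int_0^L f\,dx\bigr)\to+\infty$ as $k_0\to+\infty$. Hence $v\ge 0$ once $k_0$ is large enough, and then d'Alembert's formula $k(t,s)=\tfrac12\bigl(f(s+t)+f(s-t)\bigr)+\tfrac12\int_{s-t}^{s+t}v(\tau)\,d\tau$ together with $f\ge0$ gives $k\ge0$ on all of $[0,T]\times\mathds{R}$. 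This repaired argument is essentially the paper's proof in Fourier form: the paper solves the TBVP with an arbitrary terminal constant $k_*$, puts $M=\min_s \partial_t k(0,s)$, and if $M<0$ replaces $k$ by $k+|M|t$, which is still a solution of the wave equation with the same initial data, terminal constant $k_0=k_*+|M|T$, and non-negative initial velocity $v+|M|$; non-negativity then follows from d'Alembert exactly as above, with no quantitative estimate on the oscillatory modes needed at all.
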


\begin{Remark} \label{rem:8.1}
By the basic theorem on plane curves, Theorem \ref{thm:8.1} implies
that, under the hyperbolic curvature flow, any closed convex smooth
curve in the plane can evolve into a circle in a finite time, and
remains convex for sequent times. This is a result analogous to the
well-know theorem of Gage and Hamilton for the mean curvature flow
of plane curves.
\end{Remark}

We now prove Theorem \ref{thm:8.1}.

\begin{proof} Consider the TBVP for the wave equation \eqref{eq:8.1} with the
following initial condition and terminal condition
\begin{equation} \label{eq:8.3}
k(0,s) = f(s) , \quad k(T,s)=k_*,
\end{equation}
where $k_*$ is an arbitrary given positive constant. By Theorem
\ref{thm:1-2.1}, the TBVP \eqref{eq:8.1}, \eqref{eq:8.3} has a
global $L$-periodic solution $k = k(t, s)$. Define
\begin{equation} \label{eq:8.4}
v(s) =  \partial_t k(0, s).
\end{equation}
Clearly, $v(s)$ is the initial velocity corresponding to the
solution $k=k(t,s)$. That is to say, the solution of the Cauchy
problem for the wave equation \eqref{eq:8.1} with the initial data
$k(0,s)=f(s),\;k_t(0,s)=v(s)$ is nothing but $k=k(t,s)$, moreover
this solution satisfies the terminal data $k(T,s)=k_*$.

Notice that $v(s)$ is $L$-periodic. Let
\begin{equation} \label{eq:8.5}
M = \min_{s\in[0,L]}{v(s)}.
\end{equation}
If $M\geq0$, then by D'Alembert formula
\begin{equation} \label{eq:8.6}
k(t,s)=\displaystyle\frac{f(s+t)+f(s-t)}{2}+\displaystyle\frac{1}{2}\int_{s-t}^{s+t}v(\tau)d\tau\geq0,\quad\forall
\;(t,s)\in [0,T]\times\mathds{R}.
\end{equation}
Taking $k_0=k_*$, we finish the proof of Theorem \ref{thm:8.1}.

Otherwise, it holds that $v(s)+|M|\geq 0$. Thus, it is easy to see
that $\bar{k}\triangleq k(t,s)+|M|t$ is a global $L$-periodic smooth
solution of the TBVP
\begin{equation} \label{eq:8.7}
\begin{cases}
\bar{k}_{tt}(t, s) - \bar{k}_{ss}(t, x) = 0,\\
\bar{k}(0, s) = f(s),\\
\bar{k}(T, s) = k_* + |M|T.
\end{cases}
\end{equation}
Obviously, the initial velocity corresponding to the solution
$\bar{k}(t,s)$ reads
\begin{equation} \label{eq:8.8}
\bar{v}(s)\triangleq\partial_t \bar{k}(0, s)=v(s)+|M| \geq 0.
\end{equation}
This implies that the solution is non-negative, i.e., $\bar{k}(t,
s)\geq 0$ for all $(t,s)\in [0,T]\times\mathds{R}$. Taking $k_0=k_*+
|M|T$ gives the conclusion of Theorem \ref{thm:8.1}. Thus, the proof
of Theorem \ref{thm:8.1} is completed.
\end{proof}

\section{Summary and discussions} \label{sec:9}
In the present paper we introduce three new concepts for
second-order hyperbolic equations, they read two-point boundary
value problem, global exact controllability and exact
controllability, respectively. These second-order hyperbolic
equations considered here include many important partial
differential equations arising from both theoretical aspects and
applied fields, e.g., mechanics (fluid mechanics and elasticity),
physics, engineering, control theory and geometry, etc., the typical
examples are wave equation, hyperbolic Monge-Amp\`{e}re equation,
wave map. For several kinds of important linear and nonlinear wave
equations, in this paper we prove the existence of smooth solutions
of the two-point boundary value problems and show the global exact
controllability of these wave equations. In particular, we
investigate the two-point boundary value problem for one-dimensional
wave equation defined on a closed curve and prove the existence of
smooth solution, this implies the exact controllability of this kind
of wave equation. Furthermore, based on this, we study the two-point
boundary value problems for the wave equation defined on a strip
with Dirichlet or Neumann boundary conditions and show that the
equation still possesses the exact controllability in these cases.
Finally, as an application of Theorem \ref{thm:1-2.1}, we introduce
the hyperbolic curvature flow and prove a result analogous to the
well-known theorem of Gage and Hamilton \cite{gh} for the curvature
flow of plane curves. This result can be viewed as a simple
application of hyperbolic partial differential equation to both
geometry and topology.

Usually, ``two-point boundary value problem" has another meaning: it
applies to a boundary problem for a second order ODE in a bounded
interval. The present paper generalizes this concept to the case of
second-order hyperbolic partial differential equations. The
two-point boundary value problem for partial differential equations
is a new research topic. According to the authors' knowledge, up to
now, few of results on the two-point boundary value problems for
partial differential equations, in particular, hyperbolic partial
differential equations (even for linear or nonlinear wave equations)
have been known. Therefore, the present paper can be viewed as the
first work in this new research topic.

It is well-known that, there are a lot of deep and beautiful results
on the two-point boundary value problems for ordinary differential
equations and for second-order differential inclusions. On the other
hand, there are many important results on the {\it boundary} control
problems for wave equations and hyperbolic systems. However, the
two-point boundary value problems and the boundary control problems
are essentially different two kinds of problems. Both of them play
an important role in both theoretical and applied aspects.

The main aim of this paper is to introduce the concept ``two-point
boundary value problem" for second-order hyperbolic equations and to
show the global exact controllability or exact controllability for
several kinds of important linear and nonlinear wave equations.
Although the results obtained in this paper are restrictive in some
sense (they only apply to cases in which the solution to the Cauchy
problem can be found explicitly: linear (classical) wave equations
and their reformulations), these results shed further light on the
study of this new research topic. In the future we will investigate
the following open problems which seem to us more interesting and
important: (i) what happens if we consider a general equation of the
form $$y_{tt}-y_{xx} = f(y)$$ with a regular and bounded function
$f(y)$? (ii) what happens if we include regular coefficients and/or
lower order terms? (iii) what happens if we consider a quasilinear
wave equation of the form $$y_{tt}-(\mathscr{C}(y_{x}))_x = 0$$ with
a smooth and increasing function $\mathscr{C}(\nu)$, e.g.,
$\mathscr{C}(\nu)=\frac{\nu}{\sqrt{1+\nu^2}}$? (iv) what happens if
we consider other models, particularly some nonlinear models arising
from applied fields such as control theory, fluid dynamics,
elasticity as well as engineering, etc.

\vskip 4mm

\noindent{{\bf Acknowledgements.}} This work was supported in part
by the NNSF of China (Grant No. 10971190) and the Qiu-Shi Chair
Professor Fellowship from Zhejiang University.

\end{document}